\documentclass[11pt]{amsart}
\usepackage{amsmath}
\usepackage{amscd}
\usepackage[utf8]{inputenc}
\usepackage{mathrsfs} 
\usepackage{amssymb}
\usepackage{bm}
\usepackage{graphicx}
\usepackage[centertags]{amsmath}
\usepackage{amsfonts}
\usepackage{amsthm}
\linespread{1.18}
\usepackage{enumerate}
\usepackage{tocvsec2}
\usepackage{xcolor}
\usepackage[margin=1in]{geometry}

\newtheorem{theorem}{Theorem}[section]
\newtheorem*{theorem*}{Theorem}

\newtheorem{corollary}[theorem]{Corollary}
\newtheorem{lemma}[theorem]{Lemma}
\newtheorem{rem}[theorem]{Remark}

\newtheorem{proposition}[theorem]{Proposition}

\theoremstyle{definition}

\newcommand{\ee}{\varepsilon}
\newcommand{\nn}{\mathbb{N}}
\newcommand{\rr}{\mathbb{R}}

\begin{document}
\title{Concerning summable Szlenk index}
\author{R.M. Causey}

\begin{abstract} We generalize the notion of summable Szlenk index from a Banach space to an arbitrary weak$^*$-compact set.  We prove that a weak$^*$-compact set has summable Szlenk index if and only if its weak$^*$-closed, absolutely convex hull does. As a consequence, we offer a new, short proof of a result from \cite{DK2} regarding the behavior of summability of Szlenk index under $c_0$ direct sums.  We also use this result to prove that the injective tensor product of two Banach spaces has summable Szlenk index if both spaces do, which answers a question from \cite{DK}. As a final consequence of this result, we prove that a separable Banach space has summable Szlenk index if and only if it embeds into a Banach space with an asymptotic $c_0$ finite dimensional decomposition, which generalizes a result from \cite{OSZ}. We also introduce  an ideal norm $\mathfrak{s}$ on the class $\mathfrak{S}$ of operators with summable Szlenk index and prove that $(\mathfrak{S}, \mathfrak{s})$ is a Banach ideal. For $1\leqslant p\leqslant \infty$, we prove precise results regarding the summability of the Szlenk index of an $\ell_p$ direct sum of a collection of operators.

\end{abstract}
\maketitle

\section{Introduction}

Since its inception in \cite{Sz}, the Szlenk index has been a fundamental object in the geometry of Banach space theory, including the non-linear theory (see \cite{GKL2} and \cite{GKL}).  The Szlenk index and Szlenk power type are fundamentally connected to asymptotically uniformly smooth renorming properties of spaces and operators, as was shown in \cite{C2}, \cite{CD}, \cite{KOS}, \cite{R}. Such properties have seen significant recent use in the non-linear asymptotic theory (see \cite{BKL}, \cite{KR}, \cite{LR}).  Of particular importance is the notion of a Banach space having summable Szlenk index. In \cite{GKL}, a characterization is given of those separable Banach spaces which have summable Szlenk index in terms of the behavior of the modulus of asymptotic uniform smoothness under equivalent norms. Furthermore, it is shown there that if $X$ has summable Szlenk index, and if $Y$ is uniformly homeomorphic to $X$, then $Y$ has summable Szlenk index.

In this work, we define what it means for a weak$^*$-compact set to have summable Szlenk index, which generalizes the notion of a Banach space having summable Szlenk index.  Our first result is the following.

\begin{theorem} Let $X$ be a Banach space and let $K\subset X^*$ be weak$^*$-compact.  Then $K$ has summable Szlenk index if and only if $\overline{\text{\emph{abs\ co}}}^{\text{\emph{weak}}^*}(K)$ does. 

\label{mor}

\end{theorem}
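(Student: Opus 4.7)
The implication $(\Leftarrow)$ is essentially formal. Since $K\subset \overline{\textnormal{abs co}}^{\textnormal{weak}^*}(K)$, monotonicity of the Szlenk derivation under set inclusion gives
\[
s_{\ee_n}\circ\cdots\circ s_{\ee_1}(K)\;\subset\; s_{\ee_n}\circ\cdots\circ s_{\ee_1}\bigl(\overline{\textnormal{abs co}}^{\textnormal{weak}^*}(K)\bigr)
\]
for every finite sequence $\ee_1,\dots,\ee_n>0$. Any constant $M$ witnessing summability of the Szlenk index of $\overline{\textnormal{abs co}}^{\textnormal{weak}^*}(K)$ therefore automatically witnesses it for $K$.

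The substance is in $(\Rightarrow)$. The plan is to isolate a single-step \emph{convexification lemma} of the form
\[
s_\ee\bigl(\overline{\textnormal{abs co}}^{\textnormal{weak}^*}(K)\bigr)\;\subset\;\overline{\textnormal{abs co}}^{\textnormal{weak}^*}\bigl(s_{\ee/C}(K)\bigr),
\]
valid for every weak$^*$-compact $K\subset X^*$, every $\ee>0$, and some absolute constant $C$ (my guess is $C=2$, though any universal constant suffices for the application). Because $s_{\ee/C}(K)$ is again weak$^*$-compact, iterating the lemma yields
\[
s_{\ee_n}\circ\cdots\circ s_{\ee_1}\bigl(\overline{\textnormal{abs co}}^{\textnormal{weak}^*}(K)\bigr)\;\subset\;\overline{\textnormal{abs co}}^{\textnormal{weak}^*}\bigl(s_{\ee_n/C}\circ\cdots\circ s_{\ee_1/C}(K)\bigr)
\]
for every $n$ and every choice of $\ee_i>0$. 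If $K$ has summable Szlenk index with constant $M$, then the right-hand side is empty as soon as $\sum\ee_i>CM$, which forces the left-hand side to be empty as well; so $\overline{\textnormal{abs co}}^{\textnormal{weak}^*}(K)$ has summable Szlenk index with constant at most $CM$.

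The main obstacle is the convexification lemma itself. The heuristic I would pursue is that if $x^*\in s_\ee(\overline{\textnormal{abs co}}^{\textnormal{weak}^*}(K))$, then every weak$^*$-neighborhood of $x^*$ contains two elements of $\overline{\textnormal{abs co}}^{\textnormal{weak}^*}(K)$ at norm distance close to $\ee$; a Hahn--Banach argument produces $x\in B_X$ almost realizing that distance, and the key observation
\[
\sup_{\overline{\textnormal{abs co}}^{\textnormal{weak}^*}(K)}\langle\cdot,x\rangle\;=\;\sup_{K\cup(-K)}\langle\cdot,x\rangle
\]
then transfers the large-diameter slice of the convex hull to a slice of $K\cup(-K)$ of comparable diameter. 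Combined with the identity $s_\delta(-K)=-s_\delta(K)$ and a Milman-type extremal-points argument, one should be able to realize $x^*$, for each weak$^*$-neighborhood $U$, as an absolutely convex combination of elements of $s_{\ee/C}(K)$ lying in $U$; since $\overline{\textnormal{abs co}}^{\textnormal{weak}^*}(s_{\ee/C}(K))$ is weak$^*$-closed this places $x^*$ in that set. The delicate technical point is making the extraction uniform in $U$ so that the approximating convex combinations actually weak$^*$-converge to $x^*$, which I expect to require a careful compactness or ultrafilter-limit argument.
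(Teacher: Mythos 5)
Your backward implication is correct: $K\subset L$ implies $s_\ee(K)\subset s_\ee(L)$, so iterating passes any summability constant for $\overline{\text{abs co}}^{\text{weak}^*}(K)$ down to $K$. But the forward implication cannot be carried out along the lines you propose, because the convexification lemma on which the whole plan rests is false. Take $X=c_0$, $X^*=\ell_1$, and $K=\{0\}\cup\{e_n^*:n\in\nn\}$ where $(e_n^*)_n$ is the $\ell_1$ unit vector basis. Then $K$ is weak$^*$-compact and $s_\delta(K)=\{0\}$ for every $\delta\in(0,2)$: each $e_n^*$ is weak$^*$-isolated in $K$, while every weak$^*$-neighborhood of $0$ meets $K$ in a set of diameter $2$. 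On the other hand $\overline{\text{abs co}}^{\text{weak}^*}(K)=B_{\ell_1}$, and for $\ee\in(0,2)$ the set $s_\ee(B_{\ell_1})$ contains the whole ball $\{x^*:\|x^*\|_1<1-\ee/2\}$ (given such $x^*$, perturb by $\pm(\ee/2+\eta)e_n^*$ with $n$ large and $\eta>0$ small to find nearby pairs in $B_{\ell_1}$ of norm distance $>\ee$). So for $\ee=1/2$ the element $(1/4)e_1^*\neq 0$ lies in $s_\ee(B_{\ell_1})$, while $\overline{\text{abs co}}^{\text{weak}^*}\bigl(s_{\ee/C}(K)\bigr)=\{0\}$ for every $C\geqslant 1$; no universal constant can repair the inclusion. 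Observe that the theorem itself is true in this example (both $K$ and $B_{\ell_1}$ have $2$-summable Szlenk index), so the failure is specific to your single-step lemma, not to the result.

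The structural reason the heuristic breaks is that the Hahn--Banach functional $x$ realizing a nearly-$\ee$ slice of the hull through $x^*$ certifies large diameter in the part of $K\cup(-K)$ where $x$ is near its supremum, and that part of $K$ need not be anywhere near $x^*$; in the example $x^*$ ranges over a whole ball while the derived set of $K$ collapses to $\{0\}$. Summability for the hull is recovered only in the aggregate, after all iterations, not derivation by derivation. The paper sidesteps this entirely by dualizing: it introduces $r_K(x)=\max_{x^*\in K}\text{Re }x^*(x)$ and the tree quantity $\Sigma_n(K)$, the infimum of $s$ such that every weakly null $(x_t)_{t\in D^{\leqslant n}}\subset B_X$ has a branch with $r_K\bigl(\sum_i x_{t|_i}\bigr)\leqslant s$. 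Since $r_K$ sees only the weak$^*$-closed convex hull, and since Proposition \ref{tech1} gives $\Sigma_n(K)=\Sigma_n(\mathbb{S}K)$, hull-invariance $\Sigma_n\bigl(\overline{\text{abs co}}^{\text{weak}^*}(K)\bigr)=\Sigma_n(K)$ is automatic. Lemma \ref{tonytodd} then shows by a two-sided tree argument that $M$-summability of $K$ forces $\Sigma(K)\leqslant M$ and that $\Sigma(K)\leqslant M/4$ forces $M$-summability, which yields the theorem. If you want to salvage your approach, the right move is to replace the set inclusion you aimed for by a statement about support functionals, which is exactly what $\Sigma_n$ encodes.
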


Our first  application of this result is the following embedding result, which generalizes a result from \cite{OSZ}.

\begin{theorem} If $X$ is a separable Banach space, then $X$ has summable Szlenk index if and only if there exists a Banach space $Z$ with FDD $E$ such that $E$ is asymptotic $c_0$ in $Z$ and $Z$ admits a subspace isometric to $X$. 

\label{emb}
\end{theorem}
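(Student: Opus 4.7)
The plan is to establish the two implications separately. For $(\Leftarrow)$, I would proceed in two steps. First, if $Z$ has an asymptotic $c_0$ FDD $(E_n)$, a direct Szlenk derivation calculation in $B_{Z^*}$ shows that $Z$ has summable Szlenk index: weak$^*$-slices of large diameter force block sequences in $Z$ whose asymptotic $c_0$ upper estimate, together with the tail projections $P_n^*$, provides summable control on the diameters of successive derivations. Second, summable Szlenk index passes to isometric subspaces: for an inclusion $X \hookrightarrow Z$, the restriction map $r \colon Z^* \to X^*$ is weak$^*$-continuous and surjective onto $B_{X^*}$, and the $\varepsilon$-Szlenk derivations commute with $r$ modulo a standard inclusion, transferring summability.

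The direction $(\Rightarrow)$ is the main content. Since $X$ is separable with summable Szlenk index, $X$ is Asplund and hence $X^*$ is separable. The goal is to realize $X$ isometrically inside a space $Z$ built from a carefully chosen subset of $B_{X^*}$. The strategy is: extract a weak$^*$-compact, $1$-norming subset $K \subset B_{X^*}$ together with a weak$^*$-null tree $T \subset K$ whose level-wise oscillations decay summably (this tree encodes Szlenk summability in a tree-admissible form); then define $Z$ as the completion of $c_{00}(T)$ under a suitable norm making the coordinate decomposition an FDD; finally, embed $x \mapsto (\hat{x}(t))_{t \in T}$, with the isometry following from the $1$-norming property of $K$.

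Theorem \ref{mor} enters decisively at the step where one needs summability of the Szlenk index not merely for $T$, but for its weak$^*$-closed absolutely convex hull, which appears naturally when computing the dual of $Z$ and verifying upper block estimates. Starting from summability of $B_{X^*}$, the monotonicity of Szlenk derivations passes summability to any weak$^*$-compact $K \subset B_{X^*}$; Theorem \ref{mor} then allows one to pass back and forth between $T$ and $\overline{\text{abs co}}^{w^*}(T)$ without losing the summable estimates, which is precisely what converts the tree structure into an \emph{asymptotic} $c_0$ FDD rather than a weaker upper tree estimate.

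The main obstacle is the simultaneous satisfaction of three quantitative conditions on $T$: (a) its branches are weak$^*$-null, (b) its level-$n$ Szlenk oscillations are summable with explicit constants matching the definition of asymptotic $c_0$, and (c) $T$ is $1$-norming for $X$. The $1$-norming requirement (needed for the \emph{isometric} conclusion) forces one to work with the full unit ball $B_{X^*}$ rather than a small witnessing subset; the asymptotic $c_0$ (as opposed to merely upper $c_0$ tree estimate) requirement forces the constants on level $n$ to tend to $1$. Reconciling these with the summable Szlenk index data is the delicate part, and I expect this is exactly where Theorem \ref{mor} is used to replace the naive derivation bounds by the sharper convex-hull version.
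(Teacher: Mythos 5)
The $(\Leftarrow)$ direction is fine in outline: an asymptotic $c_0$ FDD gives summable Szlenk index (this is essentially $(iv)\Rightarrow(ii)$ of Theorem \ref{main thm}), and summability passes to subspaces via the restriction map. Your detailed claim that ``$\varepsilon$-Szlenk derivations commute with $r$ modulo a standard inclusion'' is not quite how the transfer works, but the conclusion is standard and follows from the ideal property $\Sigma(ABC)\leqslant\|A\|\Sigma(B)\|C\|$ applied to the inclusion $X\hookrightarrow Z$.

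The $(\Rightarrow)$ direction is where your proposal diverges from the paper, and where there is a genuine gap. You propose to build $Z$ from scratch as the completion of $c_{00}(T)$ for a weak$^*$-null tree $T\subset B_{X^*}$ satisfying three simultaneous quantitative constraints. This is, in effect, a plan to re-derive Zippin's embedding theorem (in the quantitative form due to Schlumprecht), which is itself a substantial theorem; your sketch does not touch the real technical difficulties there, namely how to define the norm on $c_{00}(T)$ so that the evaluation map $x\mapsto(\hat{x}(t))_{t\in T}$ is an isometric embedding, how to verify that the coordinate decomposition is a shrinking FDD, and, crucially, how to control the Szlenk derivations of $B_{Z^*}$ in terms of the derivations of objects living in $B_{X^*}$. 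The paper avoids all of this by citing \cite{Sch}: one gets a Banach space $Z$ with shrinking FDD $E$, a weak$^*$-compact $B\subset B_{X^*}$, a generating set $\mathbb{B}\subset B_{Z^*}$ with $\overline{\text{abs co}}^{\text{weak}^*}(\mathbb{B})=B_{Z^*}$, and the key derivation inclusion
$$I^*\bigl(s_{\ee_1}\cdots s_{\ee_n}(\mathbb{B})\bigr)\subset s_{\ee_1/c}\cdots s_{\ee_n/c}(B).$$
This inclusion is what transfers summability from $B_{X^*}$ to $\mathbb{B}$, and then Corollary \ref{main corollary} (equivalently Theorem \ref{mor}) upgrades this to summability of $B_{Z^*}=\overline{\text{abs co}}^{\text{weak}^*}(\mathbb{B})$. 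So you have correctly identified the role of Theorem \ref{mor}: it is used to pass from a generating set to its weak$^*$-closed absolutely convex hull, i.e.\ from $\mathbb{B}$ to $B_{Z^*}$. However, your generating set $T$ lives in $B_{X^*}$, not $B_{Z^*}$, so $\overline{\text{abs co}}^{\text{weak}^*}(T)$ would not give you information about $B_{Z^*}$ directly; you are conflating the set $B$ (in $B_{X^*}$) with the set $\mathbb{B}$ (in $B_{Z^*}$), and the nontrivial bridge between them is precisely the derivation inclusion above, which you have not produced. Once $B_{Z^*}$ is known to have summable Szlenk index, $Z$ is Asymptotic $c_0$ and some blocking of $E$ is asymptotic $c_0$ in $Z$ by Theorem \ref{main thm}; that last reduction is also missing from your account. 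In short, the intuition is right but the construction you propose would amount to reproving Schlumprecht's theorem, and the precise mechanism by which summability is transported from $X^*$ to $Z^*$ is absent.
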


Our second application answers a question posed in \cite{DK}. 

\begin{theorem} Let $A_0:X_0\to Y_0$, $A_1:X_1\to Y_1$ be bounded, linear operators. If $A_0, A_1$ have summable Szlenk index, then so does the induced operator $A_0\otimes A_1: X_0\hat{\otimes}_\ee X_1\to Y_0\hat{\otimes}_\ee Y_1$ between the injective tensor products.  If neither $A_0$ nor $A_1$ is the zero operator, then the converse holds as well.

\end{theorem}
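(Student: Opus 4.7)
The plan is to use Theorem \ref{mor} to reduce the forward implication to a statement about the weak$^*$-compact set of simple tensors. A direct computation shows that the adjoint acts on simple tensors by $(A_0\otimes A_1)^*(y_0^*\otimes y_1^*)=A_0^*y_0^*\otimes A_1^*y_1^*$, and a Hahn--Banach argument identifies $B_{(Y_0\hat\otimes_\ee Y_1)^*}$ with the weak$^*$-closed absolutely convex hull of the weak$^*$-compact set $\{y_0^*\otimes y_1^*:y_j^*\in B_{Y_j^*}\}$ of simple tensors. Weak$^*$-to-weak$^*$ continuity of the adjoint then yields
\[ (A_0\otimes A_1)^*\bigl(B_{(Y_0\hat\otimes_\ee Y_1)^*}\bigr)=\overline{\text{\emph{abs\ co}}}^{w^*}(K_0\otimes K_1), \]
where $K_j:=A_j^*(B_{Y_j^*})\subset X_j^*$ and $K_0\otimes K_1:=\{x_0^*\otimes x_1^*:x_j^*\in K_j\}$. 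By Theorem \ref{mor}, the forward direction reduces to showing that $K_0\otimes K_1$ has summable Szlenk index.

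The technical core is the containment
\[ s_\ee(K_0\otimes K_1)\ \subseteq\ \bigl(s_{\ee/(2M)}(K_0)\otimes K_1\bigr)\cup\bigl(K_0\otimes s_{\ee/(2M)}(K_1)\bigr), \]
where $M$ is a common upper bound for the norms on $K_0$ and $K_1$. It rests on the identity $x_0^*\otimes x_1^*-y_0^*\otimes y_1^*=(x_0^*-y_0^*)\otimes x_1^*+y_0^*\otimes(x_1^*-y_1^*)$, which, combined with the crossnorm property of the injective norm, gives $\|x_0^*\otimes x_1^*-y_0^*\otimes y_1^*\|_\ee\le M(\|x_0^*-y_0^*\|+\|x_1^*-y_1^*\|)$; hence an $\ee$-separation of simple tensors forces an $\ee/(2M)$-separation in at least one of the factor coordinates. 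Iterating the containment $n$ times places every element of $s_{\ee_n}\circ\cdots\circ s_{\ee_1}(K_0\otimes K_1)$ in a product set of the form $s_{\ee_{i_1}/(2M)}\cdots s_{\ee_{i_k}/(2M)}(K_0)\otimes s_{\ee_{j_1}/(2M)}\cdots s_{\ee_{j_{n-k}}/(2M)}(K_1)$ for some partition $\{i_1,\dots,i_k\}\sqcup\{j_1,\dots,j_{n-k}\}=\{1,\dots,n\}$; summable Szlenk indices $C_0, C_1$ of $K_0, K_1$ then give $\sum_{i=1}^n\ee_i\le 2M(C_0+C_1)$.

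For the converse, suppose $A_0\otimes A_1$ has summable Szlenk index and $A_1\ne 0$. Choose $u_1\in X_1$ with $\|u_1\|=1$ and $y_1^*\in Y_1^*$ with $\|y_1^*\|=1$ and $c:=y_1^*(A_1u_1)\ne 0$. The map $I\colon X_0\to X_0\hat\otimes_\ee X_1$ defined by $x_0\mapsto x_0\otimes u_1$ is an isometric embedding, and the continuous linear extension $R\colon Y_0\hat\otimes_\ee Y_1\to Y_0$ of $y_0\otimes y_1\mapsto y_1^*(y_1)y_0$ has norm at most $1$. Since $R\circ(A_0\otimes A_1)\circ I=c\cdot A_0$, closure of the class of operators with summable Szlenk index under composition with bounded operators and under nonzero scalar multiplication forces $A_0$ to have summable Szlenk index; a symmetric argument handles $A_1$.

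The principal obstacle is making the key containment rigorous. Weak$^*$-convergence of simple tensors $y_0^{*\alpha}\otimes y_1^{*\alpha}\to x_0^*\otimes x_1^*$ in $(X_0\hat\otimes_\ee X_1)^*$ does not directly entail weak$^*$-convergence of the factors in $X_j^*$, because a nonzero simple tensor factors uniquely only up to a reciprocal scalar, and any naive renormalization would take the factors out of $K_0, K_1$. One handles this by extracting weak$^*$-convergent subnets $y_j^{*\alpha}\to z_j^*$ via compactness of $K_j$, using absolute convexity of $K_j=A_j^*(B_{Y_j^*})$ to absorb scaling factors $|\lambda|\le 1$, and treating the degenerate case where a limiting factor vanishes by enlarging each $K_j$ to $K_j\cup\{0\}$, an operation that preserves summable Szlenk index. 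One must also verify that $s_\ee$ distributes over the finite unions that appear along the iteration, which follows from a standard subnet argument for reasonable versions of the Szlenk derivation.
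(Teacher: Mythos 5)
Your approach is essentially the paper's: reduce via Theorem \ref{mor} to the set of simple tensors, prove an iterated containment for the Szlenk derivation of that set (the paper's Proposition \ref{tensor}), and use the ideal property for the converse. The only substantive discrepancy is a dropped factor of $2$: the diameter condition in the definition of $s_\ee$ yields a net with $\|u^*_\lambda-u^*\|>\ee/2$, so the factor split produces an $\ee/(4M)$-separation in one coordinate, not $\ee/(2M)$; this changes only the final constant. Also, $s_\ee$ does not literally ``distribute'' over finite unions (a $>\ee$-diameter in $V\cap(K_1\cup K_2)$ can be witnessed by one point in each $K_i$), but the subnet argument you allude to gives $s_\ee(\bigcup_j K_j)\subseteq\bigcup_j s_{\ee/2}(K_j)$; the paper avoids any compounding of this constant by carrying the union through the induction on $n$ rather than deriving each union separately, and you would want to do the same to avoid a $2^n$ blow-up.
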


One last application of Theorem \ref{mor} is a short proof of an operator version of a result from \cite{DK2}.

\begin{theorem} Let $\Lambda$ be a non-empty set and let $A_\lambda:X_\lambda\to Y_\lambda$ be a uniformly bounded collection of linear operators. Then the induced operator $A:(\oplus_{\lambda\in \Lambda} X_\lambda)_{c_0(\Lambda)}\to (\oplus_{\lambda\in \Lambda}Y_\lambda)_{c_0(\Lambda)}$ has summable Szlenk index if and only if the operators $A_\lambda$ have uniformly summable Szlenk index.

\end{theorem}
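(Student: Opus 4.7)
The plan is to reduce the statement, via Theorem \ref{mor}, to a statement about a single weak$^*$-compact set that is essentially a disjoint union of the individual ``adjoint ball images.''  Write $X=(\oplus_\lambda X_\lambda)_{c_0(\Lambda)}$ and $Y=(\oplus_\lambda Y_\lambda)_{c_0(\Lambda)}$, so that
\[
X^*=\Bigl(\oplus_\lambda X_\lambda^*\Bigr)_{\ell_1(\Lambda)},\qquad Y^*=\Bigl(\oplus_\lambda Y_\lambda^*\Bigr)_{\ell_1(\Lambda)},
\]
and the adjoint $A^*$ acts coordinatewise as $(A_\lambda^*)_\lambda$.  For each $\lambda$, let $K_\lambda:=A_\lambda^*(B_{Y_\lambda^*})$, viewed naturally inside $X^*$ through the canonical embedding of $X_\lambda^*$ into the $\lambda$-th coordinate of the $\ell_1(\Lambda)$-sum, and put $K:=\{0\}\cup\bigcup_{\lambda\in\Lambda}K_\lambda$.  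The uniform boundedness of $(A_\lambda)$ combined with the disjoint-coordinate structure forces $K$ to be weak$^*$-compact: any net in $K$ either eventually lies in a fixed piece $K_\mu$ (where weak$^*$-compactness of $K_\mu$ applies) or else visits infinitely many different coordinates, in which case the only possible weak$^*$-cluster point is $0\in K$.

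Next I would verify the identity
\[
A^*(B_{Y^*})=\overline{\text{abs\,co}}^{\,\text{weak}^*}(K),
\]
which follows because $B_{Y^*}$ is the weak$^*$-closed absolutely convex hull of $\bigcup_\lambda B_{Y_\lambda^*}$ (with the standard coordinate embeddings) and $A^*$ is weak$^*$--weak$^*$ continuous and bounded.  By the definition of summable Szlenk index for an operator, $A$ has summable Szlenk index if and only if $A^*(B_{Y^*})$ does, and Theorem \ref{mor} now transfers this to $K$.  Thus the theorem is equivalent to the statement:
\[
K\ \text{has summable Szlenk index}\iff\ (K_\lambda)_{\lambda\in\Lambda}\ \text{has uniformly summable Szlenk index.}
\]

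For the forward implication, I would use that for each fixed $\lambda$ the projection $P_\lambda:X^*\to X_\lambda^*$ (onto the $\lambda$-th coordinate of the $\ell_1$-sum) is norm-one and weak$^*$-continuous, and satisfies $P_\lambda(K)=K_\lambda\cup\{0\}$.  A routine compatibility of the Szlenk derivation with weak$^*$-continuous norm-one maps then shows that the summability constant of each $K_\lambda$ is bounded by that of $K$.  For the reverse implication, suppose each $K_\lambda$ has summable Szlenk index with a common constant $C$; the disjoint-coordinate structure means that for $x^*\in K_\lambda$ with $\|x^*\|>\ee$, any sufficiently small weak$^*$-neighborhood of $x^*$ meets $K$ only inside $K_\lambda$ (points from other pieces live in orthogonal $\ell_1$-coordinates, so are at distance $\geqslant\|x^*\|$), and therefore $s_\ee(K)\cap K_\lambda=s_\ee(K_\lambda)\cap\{\|\cdot\|>\ee\text{-region}\}$ up to the single accumulation point $0$.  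Iterating, one sees that any nonempty iterated derivation of $K$ either meets some $K_\lambda$ along an iterated derivation of $K_\lambda$ or consists of $\{0\}$, and in both cases the corresponding sum of $\ee_i$'s is controlled by $C$ (plus, if $0\in\Lambda$ is the pivot, an additive constant coming from $\sup_\lambda\|A_\lambda\|$).

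The main obstacle I anticipate is the careful bookkeeping at the distinguished point $0\in K$, where arbitrarily many pieces accumulate in the weak$^*$-topology.  One must argue that the derivations ``pivoting through $0$'' do not create unbounded towers of $\ee$'s, and this is exactly the place where \emph{uniform} summability—rather than merely summability of each individual $K_\lambda$—is needed; everything else is a mechanical translation between the operator and set formulations and an application of Theorem \ref{mor}.
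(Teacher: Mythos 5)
Your proposal is correct and follows essentially the same route as the paper: reduce via Theorem \ref{mor} (Corollary \ref{main corollary}) to the weak$^*$-compact union $K=\bigcup_{\lambda}A_\lambda^*B_{Y_\lambda^*}$, use the disjoint $\ell_1$-coordinate structure to obtain $s_{\ee_1}\cdots s_{\ee_n}(K)\subseteq\{0\}\cup\bigcup_\lambda s_{\ee_1}\cdots s_{\ee_n}(A_\lambda^*B_{Y_\lambda^*})$, and then absorb the single possible ``pivot at $0$'' with the additive correction $2\sup_\lambda\|A_\lambda\|$, exactly as you anticipate. (For the forward direction, it is cleaner to note that $A_\lambda^*B_{Y_\lambda^*}$ embeds isometrically and weak$^*$-homeomorphically into $A^*B_{Y^*}$ and then invoke monotonicity of $s_\ee$ in its argument, rather than appealing to a general compatibility of $s_\ee$ with weak$^*$-continuous norm-one surjections, which is not a valid principle in general.)
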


We also study the ideal properties of the class $\mathfrak{S}$ of operators with summable Szlenk index, as well as introduce a way to assign to each operator $A$ a value $\Sigma(A)\in [0, \infty]$ such that $A$ has summable Szlenk index if and only if $\Sigma(A)<\infty$.  Moreover, the quantity $\mathfrak{s}(A):=\|A\|+ \Sigma(A)$ defines an  ideal norm  on $\mathfrak{S}$.  In this direction, we prove the following. 

\begin{theorem} The class $(\mathfrak{S}, \mathfrak{s})$ is a Banach ideal.

\end{theorem}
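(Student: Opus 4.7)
The plan is to verify separately the three axioms of a Banach operator ideal: that $\mathfrak{s}$ is an ideal norm on $\mathfrak{S}$, that $\mathfrak{s}$ agrees with the operator norm on rank-one operators, and that each component $(\mathfrak{S}(X,Y), \mathfrak{s})$ is complete.

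Homogeneity $\Sigma(\lambda A) = |\lambda|\Sigma(A)$ is immediate from $s_{|\lambda|\varepsilon}(\lambda K) = \lambda s_\varepsilon(K)$, which propagates through iterated derivations. For the ideal inequality $\mathfrak{s}(CAB) \leq \|C\|\mathfrak{s}(A)\|B\|$, I would chase the containments $C^*B_{Z^*} \subseteq \|C\|B_{Y^*}$ and, after rescaling $B$ to a contraction, apply monotonicity of Szlenk derivations in inclusion together with the standard fact that $s_\varepsilon(T^*K) \subseteq T^*s_\varepsilon(K)$ for contractions $T$. Homogeneity then produces the two norm factors and yields $\Sigma(CAB) \leq \|C\|\|B\|\Sigma(A)$. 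The triangle inequality $\Sigma(A_0 + A_1) \leq \Sigma(A_0) + \Sigma(A_1)$ is the substantive point, and it is where Theorem \ref{mor} enters essentially: the containment $(A_0 + A_1)^*B_{Y^*} \subseteq 2\,\overline{\text{abs co}}^{w^*}(A_0^*B_{Y^*} \cup A_1^*B_{Y^*})$ reduces the matter to a quantitative bound on the summable index of a weak$^*$-closed absolutely convex hull of a union in terms of the summable indices of the pieces, which should be accessible by quantifying the proof of Theorem \ref{mor}.

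For the unit condition on rank-one operators, if $A = y \otimes x^*$ then $A^*B_{Y^*}$ lies in the one-dimensional subspace spanned by $x^*$ and is therefore norm-compact. On a norm-compact set the weak$^*$ and norm topologies coincide, so for every $\varepsilon > 0$ the first $\varepsilon$-Szlenk derivation is empty, giving $\Sigma(A) = 0$ and $\mathfrak{s}(A) = \|A\| = \|y\|\|x^*\|$.

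Completeness I expect to be the main obstacle. Given an $\mathfrak{s}$-Cauchy sequence $(A_n)$, operator norm convergence $A_n \to A$ is automatic. The heart of the argument is a lower semicontinuity statement: if $T_n \to T$ in operator norm, then $\Sigma(T) \leq \liminf_n \Sigma(T_n)$. I would establish this by showing that any witness producing iterated Szlenk derivations inside $T^*B_{Y^*}$ can be perturbed, at the cost of an $\varepsilon$-parameter loss tending to zero as $n \to \infty$, to a witness for $T_n^*B_{Y^*}$. Once semicontinuity is in hand, $\Sigma(A) \leq \liminf_n \Sigma(A_n) < \infty$ gives $A \in \mathfrak{S}$, and applying the same inequality to the differences $A - A_n$ versus $A_m - A_n$ for large $m$ yields $\mathfrak{s}(A - A_n) \to 0$.
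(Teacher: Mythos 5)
Your outline is mostly sound in spirit, but there is a genuine gap at the triangle inequality, which is precisely the point you flag as ``substantive,'' and the route you sketch there does not close.

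You propose to deduce $\Sigma(A_0+A_1)\leqslant \Sigma(A_0)+\Sigma(A_1)$ from the containment $(A_0+A_1)^*B_{Y^*}\subseteq 2\,\overline{\text{abs\ co}}^{\text{weak}^*}(A_0^*B_{Y^*}\cup A_1^*B_{Y^*})$ together with a quantified version of Theorem~\ref{mor}. But the sharp quantitative statement behind Theorem~\ref{mor} is Proposition~\ref{tech1}$(v)$ and $(vii)$: the absolutely convex hull has the \emph{same} $\Sigma_n$ as the original set, and $\Sigma_n$ of a finite union is the \emph{maximum} of the $\Sigma_n$'s, not the sum. Feeding your containment through those identities, together with positive homogeneity, yields at best $\Sigma(A_0+A_1)\leqslant 2\max\{\Sigma(A_0),\Sigma(A_1)\}$, which is a quasi-triangle inequality and makes $\mathfrak{s}$ a quasi-norm, not a norm. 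The paper instead uses the Minkowski sum containment $(A_0+A_1)^*B_{Y^*}\subseteq A_0^*B_{Y^*}+A_1^*B_{Y^*}$ and Proposition~\ref{tech1}$(iii)$, $\Sigma_n(K+L)\leqslant \Sigma_n(K)+\Sigma_n(L)$, whose proof is a two-step pruning argument using the additivity of $r_{K+L}=r_K+r_L$. That is the device you need and it is not recoverable from the absolutely-convex-hull of a union.

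Two further remarks. For the ideal inequality, the ``standard fact'' $s_\ee(T^*K)\subseteq T^*s_\ee(K)$ for contractions is more delicate than you suggest: as stated for arbitrary weak$^*$-compact $K$ it requires a slicing argument and convexity hypotheses, and it is not needed here. The paper instead proves $\Sigma_n(ABC)\leqslant \Sigma_n(B)$ directly from the tree definition of $\Sigma_n$: if $(w_t)_{t\in D^{\leqslant n}}\subset B_W$ is weakly null and $\|C\|\leqslant 1$, then $(Cw_t)_{t\in D^{\leqslant n}}\subset B_X$ is weakly null and $\|ABC\sum_i w_{t|_i}\|\leqslant\|A\|\,\|B\sum_i Cw_{t|_i}\|$, and homogeneity finishes it. Finally, completeness is not the obstacle you expect: since $r_{T^*B}\leqslant R\|\cdot\|$ whenever $\|T\|\leqslant R$, Proposition~\ref{tech1}$(ii)$ gives $\Sigma_n(A-A_k)\leqslant n\|A-A_k\|$, so for each fixed $n$ the map $T\mapsto\Sigma_n(T)$ is operator-norm continuous. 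Then
\[\Sigma(A)=\sup_n\Sigma_n(A)\leqslant\sup_n\limsup_k\Sigma_n(A_k)\leqslant\limsup_k\Sigma(A_k)<\infty\]
and $\limsup_n\Sigma(A-A_n)\leqslant\limsup_n\limsup_k\Sigma(A_k-A_n)=0$, with no perturbation of derivation sets required.
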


We also study the behavior of summable Szlenk index of $\ell_p$ direct sums of operators for $1\leqslant p\leqslant \infty$. Such a study is trivial in the setting of spaces, since the norm of the identity operator of a Banach space is either $0$ or $1$, but non-trivial for operators.  We prove the following. 

\begin{theorem} Let $\Lambda$ be a non-empty set and let $A_\lambda:X_\lambda\to Y_\lambda$ be a uniformly bounded collection of linear operators. Then for any $1\leqslant p\leqslant \infty$, the induced operator $A:(\oplus_{\lambda\in \Lambda} X_\lambda)_{\lambda_p(\Lambda)}\to (\oplus_{\lambda\in \Lambda} Y_\lambda)_{\ell_p(\Lambda)}$ has summable Szlenk index if and only if $(\|A_\lambda\|)_{\lambda\in \Lambda}\in c_0(\Lambda)$ and $(\Sigma(A_\lambda))_{\lambda\in \Lambda}\in \ell_p(\Lambda)$.

\end{theorem}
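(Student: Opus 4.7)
The strategy is to prove, for finite $\Lambda$, the sharp identity
\[
\Sigma\bigl(\oplus_{\lambda\in F}A_\lambda\bigr) \;=\; \bigl\|(\Sigma(A_\lambda))_{\lambda\in F}\bigr\|_{\ell_p}\qquad (F\subset\Lambda,\;|F|<\infty),
\]
and then bootstrap to arbitrary $\Lambda$ via the Banach-ideal completeness of $(\mathfrak{S},\mathfrak{s})$ established in the previous theorem. Set $X:=(\oplus_\lambda X_\lambda)_{\ell_p(\Lambda)}$, $Y:=(\oplus_\lambda Y_\lambda)_{\ell_p(\Lambda)}$, and for $F\subset\Lambda$ write $I_F$, $P_F$ for the canonical norm-one coordinate inclusions and projections, so that $A_F:=\oplus_{\lambda\in F}A_\lambda=P_F A I_F$ factors through $A$ with norm-one factors. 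For $p<\infty$ one has $X^*=(\oplus_\lambda X_\lambda^*)_{\ell_q(\Lambda)}$ with $q$ conjugate to $p$, and $A^*B_{Y^*}=\{(A_\lambda^*y_\lambda^*)_\lambda:\|(y_\lambda^*)\|_{\ell_q}\leqslant 1\}$; for $p=\infty$, the hypothesis $(\|A_\lambda\|)\in c_0$ forces $A$ to land in the $c_0$-sum on the codomain, so that $A^*B_{Y^*}$ is identified with an $\ell_1$-type set in the $c_0$-predual.

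The crux is the finite identity. For the upper bound, I would analyse an $\varepsilon$-separated weak$^*$-null sequence in $A_F^*B_{Y^*}$ coordinatewise: the $\lambda$-th coordinates form weak$^*$-null sequences in amplitude-scaled copies of $A_\lambda^*B_{Y_\lambda^*}$, and splitting $\varepsilon^p=\sum_{\lambda\in F}\varepsilon_\lambda^p$, one applies the per-coordinate bound $\Sigma(A_\lambda)/\varepsilon_\lambda$ and aggregates via H\"older's inequality against the $\ell_q$-amplitude constraint to get the total bound $\|(\Sigma(A_\lambda))\|_{\ell_p}/\varepsilon$. For $p=\infty$ in particular, Theorem \ref{mor} applies directly, since $A^*B_{Y^*}$ is the weak$^*$-closed absolutely convex hull of $\bigcup_{\lambda\in F}P_\lambda^*A_\lambda^*B_{Y^*_\lambda}$ (the axis-aligned pieces), reducing the problem to the union, whose summability is equivalent to uniform boundedness of the $\Sigma(A_\lambda)$. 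For the matching lower bound, one chooses, for each $\lambda\in F$, a near-optimal $\varepsilon_\lambda$-separated weak$^*$-null witness sequence in $A_\lambda^*B_{Y_\lambda^*}$ and combines them, with the parameters $(\varepsilon_\lambda)$ and $\ell_q$-amplitudes $(r_\lambda)$ balanced by $\ell_p/\ell_q$ duality, into a single weak$^*$-null sequence in $A_F^*B_{Y^*}$ of length realising $\|(\Sigma(A_\lambda))\|_{\ell_p}/\varepsilon$.

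Granted the finite identity, the bootstrap is routine. The "only if" direction uses $\mathfrak{s}(A_F)\leqslant\mathfrak{s}(A)$ together with the finite identity to obtain $\|(\Sigma(A_\lambda))_{\lambda\in F}\|_{\ell_p}\leqslant\Sigma(A)$ for every finite $F$, hence $(\Sigma(A_\lambda))\in\ell_p(\Lambda)$; the condition $(\|A_\lambda\|)\in c_0$ follows from the standard observation that a non-$c_0$ sequence of coordinate norms yields an $\ell_p$- or $c_0$-type basic sequence of unit vectors in $B_X$ whose $A$-images are bounded below, which is incompatible with summability of the Szlenk index. For the "if" direction, set $F_n:=\{\lambda:\|A_\lambda\|\geqslant 1/n\}$ (finite by the $c_0$ condition) and let $A^{(n)}$ denote $A$ restricted to the coordinates in $F_n$; each $A^{(n)}\in\mathfrak{S}$ by the finite identity, and for $m'>m$, the difference $A^{(m')}-A^{(m)}$ is a finite $\ell_p$-sum of $A_\lambda$ for $\lambda\in F_{m'}\setminus F_m$, so by the finite identity $\mathfrak{s}(A^{(m')}-A^{(m)})\leqslant 1/m + \|(\Sigma(A_\lambda))_{\lambda\notin F_m}\|_{\ell_p}\to 0$, showing $(A^{(n)})$ is $\mathfrak{s}$-Cauchy. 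Completeness of the Banach ideal provides an $\mathfrak{s}$-limit, which coincides with $A$ (as $A^{(n)}\to A$ in operator norm), so $A\in\mathfrak{S}$. The principal obstacle is the finite identity, in particular the matching lower bound for $p<\infty$, whose construction must balance the parameters $(\varepsilon_\lambda)$ and $(r_\lambda)$ across the indices $\lambda\in F$ so as to saturate the $\ell_p$-norm sharply against the $\ell_q$-dual-ball constraint.
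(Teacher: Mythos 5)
Your overall architecture matches the paper's: prove a finite-index identity $\Sigma\bigl(\oplus_{\lambda\in F}A_\lambda\bigr)=\|(\Sigma(A_\lambda))_{\lambda\in F}\|_{\ell_p}$ and bootstrap to general $\Lambda$ using $\mathfrak{s}$-Cauchyness of $(P_{F_n}A)_n$ and completeness of $(\mathfrak{S},\mathfrak{s})$. The bootstrap and the $c_0$-necessity argument are essentially identical to the paper's, and the $p=\infty$ observation (that $A^*B_{Y^*}$ is the weak$^*$-closed absolutely convex hull of the union of the coordinate pieces, so Theorem \ref{mor} together with Proposition \ref{tech1}(v) gives $\Sigma=\max$) is correct and slick.

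The gap is in your proposed proof of the finite identity for $p<\infty$, which you yourself flag as the main obstacle. You propose to argue on the dual side, analysing $\varepsilon$-separated weak$^*$-null nets in $A_F^*B_{Y^*}$ coordinatewise under a fixed splitting $\varepsilon^p=\sum_\lambda\varepsilon_\lambda^p$ and aggregating per-coordinate Szlenk-index bounds $\Sigma(A_\lambda)/\varepsilon_\lambda$ via H\"older. This runs into two problems. First, $\varepsilon$-separation of a net in an $\ell_q$-sum does not induce $\varepsilon_\lambda$-separation in any pre-chosen coordinatewise decomposition: different members of the net can concentrate their separation in different coordinates, so the per-coordinate derivation counts do not neatly decouple. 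Second, and more fundamentally, $\Sigma$ is a primal quantity (defined via weakly null trees in $B_X$), and its relation to iterated $s_\varepsilon$-derivations goes through Lemma \ref{tonytodd}, which is lossy by a factor of $4$; a dual-side Szlenk-derivation argument therefore cannot yield the \emph{sharp} identity $\Sigma(A_F)=\|(\Sigma(A_\lambda))\|_{\ell_p}$ that your bootstrap relies on (your ``if'' direction needs the sharp upper bound to make $\mathfrak{s}(A^{(m')}-A^{(m)})\to 0$; the ``only if'' needs the sharp lower bound).

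The paper sidesteps both issues by staying entirely on the primal side. For the upper bound: given a weakly null $(x_t)_{t\in D^{\leqslant n}}\subset B_X$, apply Proposition \ref{tree} once per coordinate $j$ to the projected trees $(P_jx_t)$ to stabilize each $\|A P_j\sum_m x_{t|_m}\|\leqslant a_j$ simultaneously along a pruned subtree; since $A$-images of coordinates are disjointly supported, $\|A\sum_m x_{t|_m}\|=\|(\|AP_j\sum_m x_{t|_m}\|)_j\|_{\ell_p^k}\leqslant\|(a_j)\|_{\ell_p^k}$. For the lower bound, the paper does not balance amplitudes at all: it fixes per-coordinate witness trees $(x^j_t)_{t\in D^{\leqslant n}}\subset B_{X_j}$ realising $\|A\sum_m x^j_{t|_m}\|\geqslant b_j$, then \emph{concatenates} them serially into one tree of depth $kn$ (the $j$-th block of $n$ levels living in $X_j$); disjointness of supports again gives $\|A\sum_{m=1}^{kn}x_{t|_m}\|\geqslant\|(b_j)\|_{\ell_p^k}$ along every branch. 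This concatenation trick is exactly what replaces the delicate $\ell_p/\ell_q$-balancing you anticipated would be needed and avoids the dual-side difficulties altogether. If you adopt the primal viewpoint for the finite step, the rest of your plan goes through.
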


\section{Definitions}

Throughout, $\mathbb{K}$ will denote the scalar field (either $\mathbb{R}$ or $\mathbb{C}$), and ``operator'' will mean ``bounded, linear operator.''  

Given a directed set $D$ and $n\in \nn$, we let $D^{\leqslant n}=\cup_{i=1}^n D^i$.  Given $t=(u_i)_{i=1}^k \in D^{\leqslant n}$, we let $|t|=k$, $t^-=(u_i)_{i=1}^{k-1}$ (where $(u_i)_{i=1}^0=\varnothing$ by convention), and for $0\leqslant j\leqslant k$, $t|_j=(u_i)_{i=1}^j$.    Given $t=(u_i)_{i=1}^k\in \{\varnothing\}\cup D^{\leqslant n-1}$ and $u\in D$, we let $t\smallfrown u=(u_1, \ldots, u_k, u)\in D^{\leqslant n}$.  For $s,t\in D^{\leqslant n}$, we let $s\smallfrown t$ be the concatenation of $s$ and $t$.   If $X$ is a Banach space, we say a collection $(x_t)_{t\in D^{\leqslant n}}\subset X$ is \emph{weakly null} provided that for every $t\in \{\varnothing\}\cup D^{\leqslant n-1}$, $(x_{t\smallfrown u})_{u\in D}$ is a weakly null net. We say a map $\phi:D^{\leqslant n}\to D^{\leqslant n}$ is a \emph{pruning} provided that   $|\phi(t)|=|t|$ and $\phi(t)^-=\phi(t^-)$ for each $t\in D^{\leqslant n}$ and such that the collection $(x'_t)_{t\in D^{\leqslant n}}$ is weakly null, where $x'_t=x_{\phi(t)}$. The following can be easily proved by induction on $n$.  We will use this result frequently. 

\begin{proposition} Let $D$ be a directed set, $n\in \nn$, $X$ a Banach space, and $(x_t)_{t\in D^{\leqslant n}}$ a weakly null collection.  Let $(M, d)$ be a compact metric space and suppose $F:D^n\to M$ is any function. Then for any $\delta>0$, there exist a pruning $\phi:D^{\leqslant n}\to D^{\leqslant n}$ and $\varpi\in M$ such that $d(\varpi, \phi(t))\leqslant \delta$ for any $t\in D^n$.  

\label{tree}

\end{proposition}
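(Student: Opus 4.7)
My plan is to proceed by induction on $n$, as the author suggests. The key combinatorial tool for the base case is a pigeonhole lemma for directed sets: if $D = D_1 \cup \cdots \cup D_k$, then some $D_i$ is cofinal in $D$. Indeed, if no $D_i$ were cofinal, one could pick witnesses $u_i \in D$ such that no $u \geqslant u_i$ lies in $D_i$; by directedness there is an upper bound $u$ of all the $u_i$, and this $u$ lies outside every $D_i$, a contradiction.

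For the base case $n = 1$, I would cover $M$ by finitely many open $\delta$-balls $B(m_1, \delta), \ldots, B(m_k, \delta)$, set $D_i = F^{-1}(B(m_i, \delta))$, and choose $i_0$ so that $D_{i_0}$ is cofinal. Taking $\varpi = m_{i_0}$ and picking $\phi(u) \in D_{i_0}$ with $\phi(u) \geqslant u$ (with the convention $\phi(\varnothing) = \varnothing$) produces a pruning whose image lies in $D_{i_0}$ and which is order-expanding. Order-expansion guarantees that $(x_{\phi(u)})_{u \in D}$ remains weakly null: given any weak neighborhood $V$ of $0$ and any $u_0$ witnessing weak nullness of $(x_u)_{u \in D}$, the inequality $\phi(u) \geqslant u \geqslant u_0$ forces $x_{\phi(u)} \in V$.

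The inductive step carries most of the bookkeeping. Assuming the statement for $n - 1$, for each $s \in D^{n-1}$ I would apply the $n = 1$ case to the weakly null net $(x_{s \smallfrown u})_{u \in D}$ paired with $u \mapsto F(s \smallfrown u)$ at tolerance $\delta/2$, producing a point $\varpi_s \in M$ and an order-expanding $\psi_s \colon D \to D$ such that $d(\varpi_s, F(s \smallfrown \psi_s(u))) \leqslant \delta/2$ and $(x_{s \smallfrown \psi_s(u)})_{u \in D}$ is weakly null. Then I would apply the inductive hypothesis to the function $s \mapsto \varpi_s$ on $D^{n-1}$, together with the weakly null collection $(x_t)_{t \in D^{\leqslant n-1}}$ at tolerance $\delta/2$, obtaining $\varpi \in M$ and a pruning $\phi_0 \colon D^{\leqslant n-1} \to D^{\leqslant n-1}$ with $d(\varpi, \varpi_{\phi_0(s)}) \leqslant \delta/2$. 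Finally, extend $\phi_0$ to $\phi \colon D^{\leqslant n} \to D^{\leqslant n}$ by keeping $\phi_0$ on the lower levels and defining $\phi(s \smallfrown u) = \phi_0(s) \smallfrown \psi_{\phi_0(s)}(u)$ on the top level. The pruning axioms $|\phi(t)| = |t|$ and $\phi(t)^- = \phi(t^-)$ transfer level by level; weak nullness of the new top level is exactly what was arranged in the choice of each $\psi_{\phi_0(s)}$; and the triangle inequality yields $d(\varpi, F(\phi(t))) \leqslant \delta$ for $t \in D^n$.

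The main obstacle is not conceptual depth but the simultaneous tracking of two features through the two-stage composition: both the weakly null structure and the pruning conditions must survive intact. This is precisely why the base case is phrased so as to yield an \emph{order-expanding} map rather than a mere cofinal selection, and why the branch-level and base-level tolerances are each halved.
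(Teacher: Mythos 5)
Your proof is correct and follows exactly the induction-on-$n$ approach the paper indicates (the paper itself omits the proof, saying only that it ``can be easily proved by induction on $n$''). The pigeonhole lemma for directed sets, the order-expanding selection in the base case, and the two-stage extension $\phi(s\smallfrown u) = \phi_0(s)\smallfrown\psi_{\phi_0(s)}(u)$ all check out; the pruning axioms and weak nullness transfer through the composition as you describe.
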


For a Banach space $X$ and $n\in \nn$, we let $\{X\}_n$ denote the set of all norms $|\cdot|$ on $\mathbb{K}^n$ such that for any $b>1$, there exists a weakly null collection $(x_t)_{t\in D^{\leqslant n}}\subset S_X$ such that for any $(a_i)_{i=1}^n\in S_{\ell_\infty^n}$ and any $t\in D^n$, $$b^{-1} \|\sum_{i=1}^n a_i x_{t|_i}\|\leqslant |\sum_{i=1}^n a_i e_i| \leqslant b\|\sum_{i=1}^n a_i x_{t|_i}\|.$$  A standard compactness argument yields that $\{X\}_n\neq \varnothing$ whenever $\dim X=\infty$.  In keeping with the terminology in \cite{JKO},  we say that $X$ is \emph{Asymptotic } $c_0$ if $\dim X=\infty$ and there exists a constant $C\geqslant 1$ such that $$C^{-1}\|\sum_{i=1}^n a_i e_i\|_{\ell_\infty^n}\leqslant |\sum_{i=1}^n a_i e_i|\leqslant C\|\sum_{i=1}^n a_i e_i\|_{\ell_\infty^n}$$ for each $n\in \nn$, each $|\cdot|\in \{X\}_n$, and each $(a_i)_{i=1}^n\in \mathbb{K}^n$.  We remark that since the canonical $\mathbb{K}^n$ basis is normalized and monotone in $(\mathbb{K}^n, |\cdot|)$ for each $|\cdot|\in \{X\}_n$, we always have $$\|\sum_{i=1}^n a_i e_i\|_{\ell_\infty^n}\leqslant 2|\sum_{i=1}^n a_i e_i|,$$ so the upper inequality is the only one we need to check in order to establish that a given infinite dimensional space is Asymptotic $c_0$.

We recall that a sequence $E=(E_n)_{n=1}^\infty$ of finite dimensional subspaces of the Banach space $X$ is called a \emph{finite dimensional decomposition} (resp. \emph{FDD}) for $X$ provided that for each $x\in X$, there exists a unique sequence $(x_n)_{n=1}^\infty \in \prod_{n=1}^\infty E_n$ such that $x=\sum_{n=1}^\infty x_n$.  In this case, for each $m\in \nn$, the projection $P^E_m\sum_{n=1}^\infty x_n=\sum_{n=1}^m x_n$ is continuous. We let $P^E_0=0$. By the Principle of Uniform Boundedness, $\sup_{0\leqslant m< n}\|P^E_n-P^E_m\|<\infty$.   If $0=m_0<m_1<\ldots$ and $F_n=\oplus_{i=m_{n-1}+1}^{m_n} E_i$, then $F=(F_n)_{n=1}^\infty$ is also an FDD for $X$. In this case, we say $F$ is a \emph{blocking} of $E$.  We say $E$ is \emph{shrinking} if $\{(P^E_n)^*(X^*): n\in \nn\}$ is dense in $X^*$, which occurs if and only if $(E^*_n)_{n=1}^\infty$ is   an FDD for $X^*$. Here,  $E^*_n$ is identified with $(P^E_n)^*(X^*)\cap \ker P^E_{n-1}$.  We say $E$ is \emph{asymptotic} $c_0$ in $X$ if there exists $C\geqslant 1$ such that for any $n\leqslant k_0<\ldots <k_n$ and any $x_i\in  \oplus_{j=k_{i-1}}^{k_i-1} E_j$, $$ C^{-1}\max_{1\leqslant i\leqslant n}\|x_i\|\leqslant \|\sum_{i=1}^n x_i\|\leqslant C\max_{1\leqslant i\leqslant n}\|x_i\|.$$   We remark that if $b=\sup_{0 \leqslant m< n} \|P^E_n-P^E_m\|$ and if $(x_i)_{i=1}^n$ is any block sequence with respect to $E$, $$\max_{1\leqslant i\leqslant n}\|x_i\|\leqslant b\|\sum_{i=1}^n x_i\|,$$ so the upper inequality is the only one we need to check in order to establish that $E$ is asymptotic $c_0$ in $X$.

Given a Banach space $X$, a weak$^*$-compact subset $K$ of $X^*$, and $\ee>0$, we let $s_\ee(K)$ denote the subset of $K$ consisting of those $x^*\in K$ such that for each weak$^*$-neighborhood $V$ of $K$, $\text{diam}(V\cap K)>\ee$. For convenience, we let $s_\ee(K)=K$ whenever $\ee\leqslant 0$. We then define by transfinite induction $$s^0_\ee(K)=K,$$ $$s^{\xi+1}_\ee(K)=s_\ee(s^\xi_\ee(K)),$$ and if $\xi$ is a limit ordinal, we let $$s^\xi_\ee(K)=\bigcap_{\zeta<\xi}s^\zeta_\ee(K).$$  If there exists an ordinal $\xi$ such that $s^\xi_\ee(K)=\varnothing$, we let $Sz(K, \ee)$ be the minimum such ordinal.   If no such ordinal exists, we write $Sz(K, \ee)=\infty$.  We define $Sz(K)=\sup_{\ee>0}Sz(K, \ee)$.   If $A:X\to Y$ is an operator, we let $Sz(A, \ee)=Sz(A^*B_{Y^*}, \ee)$ and $Sz(A)=Sz(A^*B_{Y^*})$.  If $X$ is a Banach space, we let $Sz(X, \ee)=Sz(B_{X^*}, \ee)$ and $Sz(X)=Sz(B_{X^*})$. For $M\geqslant 0$, we say $K$ has $M$-\emph{summable Szlenk index} provided that if $\ee_1, \ldots, \ee_n\in \rr$  (equivalently, if $\ee_1, \ldots, \ee_n>0$) are such that $s_{\ee_1}\ldots s_{\ee_n}(K)\neq \varnothing$, $\sum_{i=1}^n \ee_i\leqslant M$.  This implies that $Sz(K, \ee)\leqslant M/\ee+1$ for all $\ee>0$, and in particular, $Sz(K)\leqslant \omega$.  We say $K$ has \emph{summable Szlenk index} if it has $M$-summable Szlenk index for some $M\geqslant 0$.

We let $\textbf{Ban}$ denote the class of all Banach spaces over $\mathbb{K}$. We let $\mathfrak{L}$ denote the class of all operators between Banach spaces and for $X,Y\in \textbf{Ban}$, we let $\mathfrak{L}(X,Y)$ denote the set of operators from $X$ into $Y$. For $\mathfrak{I}\subset \mathfrak{L}$ and $X,Y\in \textbf{Ban}$, we let $\mathfrak{I}(X,Y)=\mathfrak{I}\cap \mathfrak{L}(X,Y)$. We recall that a class $\mathfrak{I}$ is called an ideal if \begin{enumerate}[(i)]\item For any $W,X,Y,Z\in \textbf{Ban}$, any $C\in \mathfrak{L}(W,X)$, $B\in \mathfrak{I}(X,Y)$, and $A\in \mathfrak{L}(Y,Z)$, $ABC\in \mathfrak{I}$, \item $I_\mathbb{K}\in \mathfrak{I}$, \item for each $X,Y\in \textbf{Ban}$, $\mathfrak{I}(X,Y)$ is a vector subspace of $\mathfrak{L}(X,Y)$. \end{enumerate}

We recall that an ideal $\mathfrak{I}$ is said to be  \emph{closed} provided that for any $X,Y\in \textbf{Ban}$, $\mathfrak{I}(X,Y)$ is closed in $\mathfrak{L}(X,Y)$ with its norm topology. 

If $\mathfrak{I}$ is an ideal and $\iota$ assigns to each member of $\mathfrak{I}$ a non-negative real value, then we say $\iota$ is an \emph{ideal norm} provided that \begin{enumerate}[(i)]\item for each $X,Y\in \textbf{Ban}$, $\iota$ is a norm on $\mathfrak{I}(X,Y)$, \item for any $W,X,Y,Z\in \textbf{Ban}$ and any $C\in \mathfrak{L}(W,X)$, $B\in \mathfrak{I}(X,Y)$, $A\in \mathfrak{I}(Y,Z)$, $\iota(ABC)\leqslant \|A\|\iota(B)\|C\|$, \item for any $X,Y\in \textbf{Ban}$, any $x\in X$, and any $y\in Y$, $\iota(x\otimes y)=\|x\|\|y\|$. \end{enumerate}

If $\mathfrak{I}$ is an ideal and $\iota$ is an ideal norm on $\mathfrak{I}$, we say $(\mathfrak{I}, \iota)$ is a \emph{Banach ideal} provided that for every $X,Y\in \textbf{Ban}$, $(\mathfrak{I}(X,Y), \iota)$ is a Banach space.

\section{An ideal seminorm}

Given a Banach space $X$ and a weak$^*$-compact subset $K$ of $X^*$, for $x\in X$,  we let $r_K(x)=0$ if $K=\varnothing$, and otherwise we let $r_K(x)=\max_{x^*\in K} \text{Re\ }x^*(x)$. We note that $r_{A^*B_{Y^*}}(x)=\|Ax\|$,  $r_{B_{X^*}}(x)=\|x\|$, and $r_K=r_{\overline{\text{co}}^{\text{weak}^*}(K)}$ for any weak$^*$-compact $K$.   Note also that $r_K$ is a sublinear functional, and it is a seminorm if $K$ is balanced.   Given $n\in \nn$, we let $\Sigma_n(K)$ be the infimum of those $s>0$ such that for every directed set $D$, every weakly null $(x_t)_{t\in D^{\leqslant n}}\subset B_X$, $$\inf_{t\in D^n} r_K\bigl(\sum_{i=1}^n x_{t|_i}\bigr) \leqslant s.$$  We let $\Sigma(K)=\sup_n \Sigma_n(K)$.  If $A:X\to Y$ is an operator, we let $\Sigma_n(A)=\Sigma_n(A^*B_{Y^*})$, $\Sigma(A)=\Sigma(A^*B_{Y^*})$.  If $X$ is a Banach space, we let $\Sigma_n(X)=\Sigma_n(I_X)$, $\Sigma(X)=\Sigma(I_X)$. 

\begin{rem}\upshape We note that it is convenient to allow any directed set in the definition of $\Sigma_n$. However, we obtain the same value of $\Sigma_n(K)$ if in the definition we only consider weakly null collections indexed by $D^{\leqslant n}_1$, where $D_1$ is a fixed weak neighborhood basis at $0$ in $X$. Indeed, if $(x_t)_{t\in D^{\leqslant n}}\subset B_X$ is such that $\inf_{t\in D^n} r_K(\sum_{i=1}^n x_{t|_i})$, one can define by induction some map $\phi:D_1^{\leqslant n}\to D$ such that $|\phi(t)|=|t|$, $\phi(t^-)=\phi(t)^-$ for $t\in D^{\leqslant n}$ and $(x_{\phi(t)})_{t\in D_1^{\leqslant n}}$ is also weakly null.   

\label{remrem}
\end{rem}

In what follows, $\mathbb{S}$ denotes the set of unimodular scalars. We let $\mathbb{S}K=\{\ee x^*: \ee \in \mathbb{S}, x^*\in K\}$. 

\begin{proposition} Let $X$ be a Banach space, $K\subset X^*$ weak$^*$-compact, and $n\in \nn$.  \begin{enumerate}[(i)]\item $K$ is norm compact if and only if $\Sigma_1(K)=0$ if and only if $\Sigma(K)=0$. \item If $R\geqslant 0$ is such that $K\subset RB_{X^*}$, $\Sigma_n(K)\leqslant Rn$. \item If $L\subset X^*$ is weak$^*$-compact, then $\Sigma_n(K+L)\leqslant \Sigma_n(K)+\Sigma_n(L)$. \item If $\ee$ is a unimodular scalar, $\Sigma_n(\ee K)=\Sigma_n(K)$. \item If $K_1, \ldots, K_l\subset X^*$ are weak$^*$-compact, $\Sigma_n\bigl(\cup_{i=1}^l K_i\bigr)=\max_{1\leqslant i\leqslant l} \Sigma_n(K_i).$  \item  $\Sigma_n(K)=\Sigma_n(\mathbb{S}K)$. \item $\Sigma_n(\overline{\emph{\text{abs\ co}}}^{\text{\emph{weak}}^*}(K))=\Sigma_n(K)$.  \item For $s>0$, $\Sigma_n(K)<s$ if and only if for every $(x_t)_{t\in D^{\leqslant n}}\subset B_X$, there exists a pruning $\phi:D^{\leqslant n}\to D^{\leqslant n}$ such that $$\sup_{t\in D^n} \sup_{(a_i)_{i=1}^n\in B_{\ell_\infty^n}} r_K(\sum_{i=1}^n a_i x_{\phi(t)|_i})<s.$$  \item If $\dim X=\infty$, $\Sigma_n(K)$ is the infimum of those $s>0$ such that for every directed set $D$ and every weakly null $(x_t)_{t\in D^{\leqslant n}}\subset S_X$, $$\inf_{t\in D^n}r_K(\sum_{i=1}^n x_{t|_i})\leqslant s.$$   \end{enumerate}

\label{tech1}
\end{proposition}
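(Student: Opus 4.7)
The plan is to dispatch the routine parts (ii) and (iv) first, then handle the cluster of Proposition \ref{tree}-based arguments (i), (iii), (v), (vi), (vii), and finally confront the technical heart of the proposition in (viii) and (ix). Parts (ii) and (iv) are immediate from the definitions: (ii) uses $r_K(x) \leqslant R\|x\|$ together with $\|\sum_{i=1}^n x_{t|_i}\| \leqslant n$; (iv) uses $r_{\ee K}(x) = r_K(\ee x)$ and the preservation of weak nullness under unimodular rescaling. For (i), I would invoke the standard fact that a weak$^*$-compact $K$ is norm compact if and only if $s_\ee(K) = \varnothing$ for every $\ee > 0$. If $K$ is norm compact, a finite $\ee$-net argument shows $r_K(z_\alpha) \to 0$ along any weakly null net $(z_\alpha) \subset B_X$, and iterating level-by-level forces $\Sigma_n(K) = 0$ for every $n$. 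Conversely, if some $s_\ee(K) \neq \varnothing$, pick $x_0^* \in s_\ee(K)$ and index a weak neighborhood base of $0 \in X$ to produce, via the diameter property of weak$^*$-neighborhoods, a weakly null net $(x_u) \subset B_X$ with $r_K(\pm x_u) \geqslant \ee/4$, forcing $\Sigma_1(K) > 0$.

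Parts (iii), (v), (vi), (vii) all reduce to combining Proposition \ref{tree} with the definition of $\Sigma_n$. For (iii): given weakly null $(x_t) \subset B_X$, apply Proposition \ref{tree} with $M = [-R, R]^2$ and $F(t) = (r_K(\sum_{i=1}^n x_{t|_i}), r_L(\sum_{i=1}^n x_{t|_i}))$ to obtain a pruning $\phi$ and $(a, b) \in M$ such that both coordinates lie within $\delta$ of $(a, b)$ for every $t \in D^n$; applying the definition of $\Sigma_n$ to the pruned weakly null family $(x_{\phi(t)})$ separately for $K$ and $L$ forces $a \leqslant \Sigma_n(K) + \delta$ and $b \leqslant \Sigma_n(L) + \delta$, so $r_{K+L}(\sum x_{\phi(t)|_i}) = r_K + r_L \leqslant \Sigma_n(K) + \Sigma_n(L) + O(\delta)$ at some $t$. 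Part (v) is identical with $l$-tuples using $r_{\cup K_i} = \max_i r_{K_i}$. For (vi), note $r_{\mathbb{S} K}(y) = \sup_{\ee \in \mathbb{S}} r_{\ee K}(y)$, and $\ee \mapsto r_{\ee K}(y)$ is $R\|y\|$-Lipschitz; for $\|y\| \leqslant n$, a finite $\delta/(Rn)$-net $F \subset \mathbb{S}$ gives $r_{\mathbb{S} K}(y) \leqslant r_{\cup_{\ee \in F} \ee K}(y) + \delta$, and (iv) and (v) then bound $\Sigma_n(\mathbb{S} K) \leqslant \Sigma_n(K) + 2\delta$, with the reverse following from $K \subset \mathbb{S} K$. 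Part (vii) is then immediate from $\overline{\text{abs co}}^{\text{weak}^*}(K) = \overline{\text{co}}^{\text{weak}^*}(\mathbb{S} K)$, the identity $r_K = r_{\overline{\text{co}}^{\text{weak}^*}(K)}$ noted in the preamble, and the fact that $\Sigma_n$ depends on $K$ only through $r_K$.

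The main obstacle is (viii). For $(\Rightarrow)$, choose $\delta > 0$ with $\Sigma_n(K) + 2\delta < s$ and, given weakly null $(x_t) \subset B_X$, apply Proposition \ref{tree} to the map $F: D^n \to C(B_{\ell_\infty^n})$ defined by $F(t)(a) = r_K(\sum_{i=1}^n a_i x_{t|_i})$. The image is $Rn$-bounded and $Rn$-Lipschitz in $a$, hence relatively compact in $C(B_{\ell_\infty^n})$ by Arzel\`a--Ascoli, so the pruning lemma delivers $\phi$ and $\varpi \in C(B_{\ell_\infty^n})$ with $\|F(\phi(t)) - \varpi\|_\infty \leqslant \delta$ for every $t \in D^n$. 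The key observation is that for each fixed $a \in B_{\ell_\infty^n}$, the collection $(a_{|t|} x_{\phi(t)})_{t \in D^{\leqslant n}} \subset B_X$ is still weakly null (each level is rescaled by a scalar of modulus at most one), so by the definition of $\Sigma_n(K)$ we have $\inf_t r_K(\sum_{i=1}^n a_i x_{\phi(t)|_i}) \leqslant \Sigma_n(K)$; combined with uniform $\delta$-closeness, $\varpi(a) \leqslant \Sigma_n(K) + \delta$ for every $a$, hence $\sup_{t, a} r_K(\sum_{i=1}^n a_i x_{\phi(t)|_i}) \leqslant \Sigma_n(K) + 2\delta < s$. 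Direction $(\Leftarrow)$ follows by specializing to $a = (1, \ldots, 1)$. Finally, (ix) amounts to replacing $B_X$ by $S_X$ in the definition of $\Sigma_n$: the $\leqslant$ direction is trivial since $S_X \subset B_X$, and for the reverse, using $\dim X = \infty$, I would first prune any weakly null $(x_t) \subset B_X$ so $\|x_t\|$ stabilizes at each level, then perturb each $x_t$ by a small multiple of a weakly null unit net (available by infinite-dimensionality) to obtain $y_t \in S_X$ with the family $(y_t)$ still weakly null; the cost in $r_K$ is bounded by $nR$ times the size of the perturbation, which vanishes in the limit.
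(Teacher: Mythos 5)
Your treatment of (i)--(viii) is essentially sound, with a few genuinely different routes from the paper. In (i) the paper works directly from the characterization of norm compactness via the behavior of $r_K$ on weakly null nets, whereas you pass through the Szlenk derivation $s_\ee(K)$; both are fine. In (iii) the paper applies Proposition \ref{tree} twice to a $\{0,1\}$-valued dichotomy function and deduces the bound by ruling out the ``$>a$'' branch, while you stabilize the pair $(r_K, r_L)$ near a limit value and use that the infimum over the pruned family is still $\leqslant \Sigma_n$; this is a clean alternative. The most notable variation is (viii): the paper fixes a finite $\delta$-net of $B_{\ell_\infty^n}$, runs Proposition \ref{tree} once per net point, and then uses the $Rn$-Lipschitz estimate in $a$ to pass from the net to all of $B_{\ell_\infty^n}$. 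You instead view $t\mapsto (a\mapsto r_K(\sum a_i x_{t|_i}))$ as a map into an equi-bounded, equi-Lipschitz (hence Arzel\`a--Ascoli precompact) subset of $C(B_{\ell_\infty^n})$ and invoke Proposition \ref{tree} a single time. This packages the paper's net argument more elegantly without changing its substance; the key observation that $(a_{|t|}x_{\phi(t)})_t$ remains weakly null is the same one the paper uses.

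Part (ix), however, has a genuine gap. You propose to stabilize $\|x_t\|$ at each level and then ``perturb each $x_t$ by a small multiple of a weakly null unit net to obtain $y_t\in S_X$,'' with cost $nR$ times the perturbation size. This cannot work as stated: if $\|x_t\|\approx a_{|t|}$ with $a_{|t|}$ bounded away from $1$ (say $a_{|t|}=0.1$, or even $a_{|t|}=0$), then \emph{any} $y_t\in S_X$ satisfies $\|y_t-x_t\|\geqslant 1-a_{|t|}$, so the perturbation is of order $1$, not small, and your Lipschitz bound gives nothing. The paper's proof confronts exactly this obstruction: it discards the levels $i$ with $a_i<\delta$ (whose total contribution to $r_K$ of the sum is $O(R\delta n)$), normalizes only on the remaining index set $I=\{i:a_i\geqslant\delta\}$ --- where weak nullness survives normalization because the norms are bounded below, invoking \cite[Prop.\ 7.2]{C5} to re-index by $M^{\leqslant |I|}$ --- and then, crucially, uses the sign-perturbation/convexity observation that $\sum_{i}a_i x'_{t|_i}$ lies in $\text{co}\{\sum_i\ee_i x'_{t|_i}:\ee_i\in\{\pm1\}\}$, so that sublinearity of $r_K$ lets one compare the unit-sphere quantity $r_K(\sum\ee_i x'_{t|_i})$ to the weighted quantity $r_K(\sum a_i x'_{t|_i})$ and derive a contradiction. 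Both the level-dropping and the convex-hull/sign trick are essential, and neither appears in your sketch; without them the reverse inequality $\Sigma_n(K)\leqslant\Sigma'_n(K)$ does not follow.
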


\begin{proof}$(i)$ Since $r_K$ is a sublinear functional, it follows that $\Sigma_n(K)\leqslant n \Sigma_1(K)$, so $\Sigma(K)=0$ if and only if $\Sigma_1(K)=0$ is clear. The fact that $K$ is norm compact if and only if $\Sigma_1(K)=0$ follows from the fact that $K$ is norm compact if and only if for any bounded, weakly null net $(x_\lambda)\subset B_X$, $\lim_\lambda r_K(x_\lambda)=0$.

$(ii)$ This follows from the fact that $r_K\leqslant R\|\cdot\|$, so $\Sigma_n(K)\leqslant n \Sigma_1(K)\leqslant Rn$.

$(iii)$ Fix $a>\Sigma_n(K)$ and $b>\Sigma_n(L)$.   Fix a weakly null $(x_t)_{t\in D^{\leqslant n}}\subset B_X$.  By applying Proposition \ref{tree} twice, we may fix a pruning $\phi:D^{\leqslant n}\to D^{\leqslant n}$ such that either $r_K(\sum_{i=1}^n x_{\phi(t)|_i}) \leqslant a$ or $r_K(\sum_{i=1}^n x_{\phi(t)|_i}) >a$ for all $t\in D^n$, and such that either $r_L(\sum_{i=1}^n x_{\phi(t)|_i})\leqslant b$ or $r_L(\sum_{i=1}^n x_{\phi(t)|_i})>b$ for all $t\in D^n$.  Since $a>\Sigma_n(K)$, $r_K(\sum_{i=1}^n x_{\phi(t)|_i}) \leqslant a$ for all $t\in D^n$.  Similarly, since $b>\Sigma_n(L)$, $r_L(\sum_{i=1}^n x_{\phi(t)|_i})\leqslant b$ for all $t\in D^n$.    Then fix any $t\in D^n$ and note that $$r_{K+L}(\sum_{i=1}^n x_{\phi(t)|_i})= r_K(\sum_{i=1}^n x_{\phi(t)|_i})+r_L(\sum_{i=1}^n x_{\phi(t)|_i})\leqslant a+b.$$  From this it follows that $$\inf_{t\in D^n} r_{K+L}(\sum_{i=1}^n x_{t|_i}) \leqslant \Sigma_n(K)+\Sigma_n(L).$$   Since $(x_t)_{t\in D^{\leqslant n}}\subset B_X$ was an arbitrary weakly null collection, $\Sigma_n(K+L)\leqslant \Sigma_n(K)+\Sigma_n(L)$. 

$(iv)$ This follows from the fact that $r_{\ee K}(\sum_{i=1}^n x_{t|_i})= r_K(\sum_{i=1}^n \ee x_{t|_i})$ and $(x_t)_{t\in D^{\leqslant n}}\subset B_X$ is weakly null if and only if $(\ee x_t)_{t\in D^{\leqslant n}}\subset B_X$ is.

$(v)$ Obviously $\Sigma_n(\cup_{i=1}^l K_i)\geqslant \max_{1\leqslant i\leqslant l} \Sigma_n(K_i)$.  Now fix $a<\Sigma_n(\cup_{i=1}^l K_i)$ and a weakly null collection $(x_t)_{t\in D^{\leqslant n}}\subset B_X$ such that $$\inf_{t\in D^n} r_{\cup_{i=1}^l K_i}(\sum_{i=1}^n x_{t|_i})>a.$$   Now for each $t\in D^n$, fix $i_t\in \{1, \ldots, l\}$ and $x^*_t\in K_{i_t}$ such that $$x^*_t(\sum_{i=1}^n x_{t|_i})= r_{\cup_{i=1}^l K_i}(\sum_{i=1}^n x_{t|_i}).$$ Define $f:D^n\to \{1, \ldots, l\}$ by $f(t)=i_t$ and fix a pruning $\phi:D^{\leqslant n}\to D^{\leqslant n}$  and $i\in \{1, \ldots, l\}$ such that $f\circ \phi|_{D^n}\equiv i$. We may do this by Proposition \ref{tree}. Then $$a<\inf_{t\in D^n} \text{Re\ }x^*_{\phi(t)}(\sum_{i=1}^n x_{\phi(t)|_i}) \leqslant \Sigma_n(K_i).$$

$(vi)$ Obviously $\Sigma_n(K)\leqslant \Sigma_n(\mathbb{S}K)$. For any $\delta>0$, we may fix a finite subset $T$ of $\mathbb{S}$ such that $\mathbb{S}K \subset (\cup_{\ee\in T} \ee K)+\delta B_{X^*}$. We now combine $(ii)$-$(v)$ to deduce that \begin{align*} \Sigma_n(\mathbb{S}K) & \leqslant \Sigma_n(\cup_{\ee\in T} \ee K)+\Sigma_n(\delta B_{X^*}) \leqslant \Sigma_n(K)+\delta n. \end{align*} Since this holds for any $\delta>0$, we deduce $(vi)$.

$(vii)$ Since $r_{\mathbb{S}K}=r_{\overline{\text{co}}^{\text{weak}^*}(\mathbb{S}K)}$, $$\Sigma_n(\mathbb{S}K)= \Sigma_n(\overline{\text{co}}^{\text{weak}^*}(\mathbb{S}K))= \Sigma_n(\overline{\text{abs\ co}}^{\text{weak}^*}(K)).$$  By $(vi)$, $\Sigma_n(K)=\Sigma_n(\mathbb{S}K)$.

$(viii)$ Assume $\Sigma_n(K)<s'<s$.  Fix $R>0$ such that $K\subset RB_{X^*}$ and $\delta>0$ such that $R\delta n+s'<s$.   Fix a finite $\delta$-net $F$ of $B_{\ell_\infty^n}$ and $(x_t)_{t\in D^{\leqslant n}}\subset B_X$.  By applying Proposition \ref{tree} repeatedly, once for each $(a_i)_{i=1}^n\in F$, we may fix a pruning $\phi:D^{\leqslant n}\to D^{\leqslant n}$ such that for each $(a_i)_{i=1}^n\in F$, either $$r_K(\sum_{i=1}^n a_i x_{\phi(t)|_i})\leqslant s'$$ or $$r_K(\sum_{i=1}^n a_i x_{\phi(t)|_i})>s'$$ for all $t\in D^n$. Since $(a_{|t|}x_t)_{t\in D^{\leqslant n}}\subset B_X$ is weakly null, the latter is impossible.  By our choice of $R$ and $\delta$, we deduce that $$r_K(\sum_{i=1}^n a_i x_{\phi(t)|_i}) \leqslant s$$ for all $(a_i)_{i=1}^n\in B_{\ell_\infty^n}$ and $t\in D^n$. 

The converse is clear.

$(ix)$ Assume $\dim X=\infty$. Let $\Sigma'_n(K)$ be the infimum of those $s>0$ such that for every directed set $D$ and every weakly null $(x_t)_{t\in D^{\leqslant n}}\subset S_X$, $\inf_{t\in D^n} r_K(\sum_{i=1}^n x_{t|_i})\leqslant s$.   It is clear that $\Sigma_n'(K)\leqslant \Sigma_n(K)$.  Seeking a contradiction, assume $s,s'>0$ are such that $\Sigma'_n(K) <s'<s<\Sigma_n(K)$.  Fix $R>0$ such that $K\subset RB_{X^*}$ and fix $\delta>0$ such that $2Rn\delta<s-s'$. Fix $(x_t)_{t\in D^{\leqslant n}}\subset B_X$ such that $$\inf_{t\in D^n} r_K(\sum_{i=1}^n x_{t|_i})>s.$$   By applying Proposition \ref{tree} and relabeling, we may assume there exist  numbers $a_1, \ldots, a_n\in [0,1]$ such that for each $t\in D^{\leqslant n}$, $|\|x_t\|-a_{|t|}|<\delta/2$. Let $I=\{i\leqslant n: a_i\geqslant \delta\}$ and note that $$\inf_{t\in D^n}r_K(\sum_{i\in I} x_{t|_i})>s-R\delta n.$$  Let $M$ be a weak neighborhood basis at $0$ in $X$ and note that there exists a map $\phi:M^{\leqslant |I|}\to D^{\leqslant n}$ such that $(x_{\phi(t)}/\|x_{\phi(t)}\|)_{t\in M^{\leqslant |I|}}$ is weakly null (see \cite[Proposition $7.2$]{C5}). Note that $\Sigma_{|I|}(K)\leqslant \Sigma_n(K)<s'$, since $\dim X=\infty$.    Then with $x'_t=x_{\phi(t)}/\|x_{\phi(t)}\|$, applying Proposition \ref{tree} as usual to $(\ee_i x'_t)_{t\in M^{\leqslant {|I|}}}$ for each $(\ee_i)_{i=1}^{|I|}\in \{\pm 1\}^{|I|}$, we may relabel one more time and assume that for each $t\in M^{|I|}$ and $(\ee_i)_{i=1}^{|I|}\in \{\pm 1\}^{|I|}$, $$r_K(\sum_{i=1}^{|I|} a_i x'_{t|_i})>s-2R\delta n\text{\ \ \ and\ \ \ }r_K(\sum_{i=1} \ee_i x'_{t|_i})<s'.$$   But these conditions are in contradiction, since $r_K$ is sublinear, $s'<s-2R\delta n$, and $\sum_{i=1}^{|I|} a_i x_{t|_i}'$ lies in the convex hull of $\{\sum_{i=1}^{|I|} \ee_i x_{t|_i}': (\ee_i)_{i=1}^{|I|}\in \{\pm 1\}^{|I|}\}$.

\end{proof}

The following uses standard techniques.  It is a generalization of results from \cite{GKL} to arbitrary, weak$^*$-compact sets is the duals of possibly non-separable spaces. We note that these techniques for arbitrary weak$^*$-compact sets and non-separable spaces have appeared for example in \cite[Theorem $2.2$]{C3}. For these reasons,  we only sketch the proof.

\begin{lemma} Let $X$ be a Banach space and let $K\subset X^*$ be weak$^*$-compact. \begin{enumerate}[(i)]\item If $K$ has $M$-summable Szlenk index, $\Sigma(K)\leqslant M$. \item If $\Sigma(K)\leqslant M/4$, then $K$ has $M$-summable Szlenk index. \end{enumerate}

\label{tonytodd}
\end{lemma}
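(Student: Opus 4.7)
The plan is to prove both implications by connecting weakly null trees to iterated Szlenk slices via Proposition \ref{tree} and a pruning/clustering argument.

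For $(i)$, I will argue contrapositively. Assuming $\Sigma_n(K)>M+\delta$ for some $n\in\nn$ and $\delta>0$, I will produce $\ee_1,\ldots,\ee_n>0$ with $\sum_{i=1}^n \ee_i>M$ and $s_{\ee_1}\cdots s_{\ee_n}(K)\neq\varnothing$, contradicting $M$-summability. Fix a weakly null $(x_t)_{t\in D^{\leqslant n}}\subset B_X$ witnessing $\Sigma_n(K)>M+\delta$ and, for each $t\in D^n$, select $x^*_t\in K$ with $\text{Re}\, x^*_t(\sum_{i=1}^n x_{t|_i})>M+\delta$. Apply Proposition \ref{tree} to the $\rr^n$-valued map $t\mapsto (\text{Re}\, x^*_t(x_{t|_i}))_{i=1}^n$ to prune so these coordinates cluster at scalars $a_1,\ldots,a_n$ with $\sum a_i\geqslant M+\delta/2$. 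A further diagonal pruning, applied to $t\mapsto x^*_t(y)$ as $y$ ranges over a sufficiently rich reservoir of test vectors in $X$, produces a weak$^*$-cluster point $y^*\in K$ of $(x^*_t)$. Weak nullness of $(x_{t|_i})_t$ together with stability of the $a_i$'s lets one check inductively that $y^*$ remains in the Szlenk derivative at each level: within any weak$^*$-neighborhood of the intermediate cluster point, the diameter inside the previously constructed derivative set exceeds $a_i-\delta'$ for any small $\delta'>0$. Thus $y^*\in s_{a_1-\delta'}\cdots s_{a_n-\delta'}(K)$, and after discarding any non-positive coordinates the remaining $\ee_i$'s still sum above $M$, giving the contradiction.

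For $(ii)$, I will construct the witnessing tree directly. Fix $\ee_1,\ldots,\ee_n>0$ and $x^*_\varnothing\in s_{\ee_1}\cdots s_{\ee_n}(K)$. Let $D$ be a directed set whose elements $u$ code simultaneously a weak neighborhood of $0$ in $X$ and weak$^*$-neighborhoods of each of finitely many prescribed $X^*$-elements with prescribed tolerance. By induction on $|t|$, I will build a weakly null tree $(x_t)_{t\in D^{\leqslant n}}\subset B_X$ and $x^*_t\in s_{\ee_{|t|+1}}\cdots s_{\ee_n}(K)$ such that $x_{t\smallfrown u}$ lies in the weak neighborhood of $0$ coded by $u$, $x^*_{t\smallfrown u}$ lies in the weak$^*$-neighborhood of $x^*_t$ coded by $u$, and
\[\text{Re}(x^*_{t\smallfrown u}-x^*_t)(x_{t\smallfrown u})>\ee_{|t|+1}/2.\]
The last condition uses that $x^*_t\in s_{\ee_{|t|+1}}(L)$ with $L=s_{\ee_{|t|+2}}\cdots s_{\ee_n}(K)$: any weak$^*$-neighborhood $V$ of $x^*_t$ contains points $y^*,z^*\in L$ with $\|y^*-z^*\|>\ee_{|t|+1}$, whence some $x\in B_X$ lying in the weak neighborhood of $0$ coded by $u$ satisfies $\text{Re}(y^*-z^*)(x)>\ee_{|t|+1}$; splitting across $x^*_t$ and possibly negating $x$ gives the one-half factor.

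For a branch $t\in D^n$, expanding $x^*_t=x^*_\varnothing+\sum_{j=1}^n(x^*_{t|_j}-x^*_{t|_{j-1}})$ yields
\[r_K\bigl(\sum_{i=1}^n x_{t|_i}\bigr)\geqslant \text{Re}\, x^*_t\bigl(\sum_{i=1}^n x_{t|_i}\bigr)=\sum_{i=1}^n \text{Re}\, x^*_t(x_{t|_i}),\]
and the inductive conditions force all off-diagonal cross terms $(x^*_{t|_j}-x^*_{t|_{j-1}})(x_{t|_i})$ (for $j\neq i$) and all terms $x^*_\varnothing(x_{t|_i})$ to be arbitrarily small, while the diagonal contributions exceed $\ee_i/2$. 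Passing tolerances to $0$ gives $\Sigma_n(K)\geqslant (\sum_{i=1}^n \ee_i)/2$, so $\sum_i \ee_i\leqslant 2\Sigma(K)\leqslant M/2\leqslant M$, establishing $M$-summability. The main obstacle in both parts is the simultaneous control of weak and weak$^*$ tolerances along a single directed set when $X$ is non-separable and $X^*$ is not weak$^*$-metrizable; this is precisely the technical point handled in \cite[Theorem 2.2]{C3}, which is why the lemma is stated with only a sketch.
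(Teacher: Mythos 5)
Your part $(i)$ is correct and follows essentially the same route as the paper's: contraposition, fixing a weakly null tree and selectors $x^*_t\in K$ nearly attaining $r_K$, pruning via Proposition \ref{tree} to stabilize the coordinate values $\text{Re}\,x^*_t(x_{t|_i})$ near scalars $a_1,\ldots,a_n$, and then running a downward induction to place a weak$^*$-cluster point of the $x^*_t$ in $s_{a_1-\delta'}\cdots s_{a_n-\delta'}(K)$.

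In part $(ii)$ there is a genuine gap in the inductive step. From $x^*_t\in s_{\ee_{|t|+1}}(L)$ you correctly obtain, inside every weak$^*$-neighborhood $V$ of $x^*_t$, points $y^*,z^*\in L$ with $\|y^*-z^*\|>\ee_{|t|+1}$. But the next assertion, that ``some $x\in B_X$ lying in the weak neighborhood of $0$ coded by $u$ satisfies $\text{Re}(y^*-z^*)(x)>\ee_{|t|+1}$,'' is not justified. The inequality $\|y^*-z^*\|>\ee$ only yields such an $x$ somewhere in $B_X$; there is no reason for any norming vector of $y^*-z^*$ to lie in a prescribed, and eventually very small, weak neighborhood $U$ of $0$. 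To land inside $U$ one must spend an additional factor of $2$ via a differencing trick: pick $x_v\in B_X$ almost norming $g_v:=x^*_{t\smallfrown v}-x^*_t$, pass to a subnet along which the finitely many functionals defining $U$ take nearly constant values on the $x_v$, fix a reference index $v_0$, and use $x'_v=(x_v-x_{v_0})/2\in B_X\cap U$; since $g_v\to 0$ weak$^*$ and $x_{v_0}$ is fixed, $\text{Re}\,g_v(x'_v)>\ee/4-\delta$ eventually. Combined with the initial loss of a factor of $2$ from passing from diameter $>\ee$ to a single point at distance $>\ee/2$ from $x^*_t$, this produces the factor $4$ appearing in the paper's estimate $\text{Re}\,x^*_{\phi(t)}(x_s)\geqslant(\ee_{|s|}-\delta)/4$. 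Your claimed intermediate bound $\Sigma_n(K)\geqslant(\sum_i\ee_i)/2$ is therefore too strong and not established by your argument; the correct bound is $\Sigma_n(K)\geqslant(\sum_i\ee_i)/4$. This still yields the statement of the lemma, since $\Sigma(K)\leqslant M/4$ then forces $\sum_i\ee_i\leqslant M$ whenever $s_{\ee_1}\cdots s_{\ee_n}(K)\neq\varnothing$, but as written your proof of $(ii)$ contains an invalid step.
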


\begin{proof} $(i)$ Assume $\Sigma(K)>M'>M$ and fix $n\in \nn$,  $(x_t)_{t\in D^{\leqslant n}}\subset B_X$ weakly null, and $(x^*_t)_{t\in D^n}\subset K$ such that $$M'<\inf_{t\in D^n} \text{Re\ }x^*_t(\sum_{i=1}^n x_{t|_i}).$$  Fix $R>0$ such that $K\subset RB_{X^*}$ and define $f:D^n\to RB_{(\ell_\infty^n)^\rr}$ by $f(t)=(\text{Re\ }x^*_t(x_{t|_i}))_{i=1}^n$.   Fix $\delta>0$ such that $M+3\delta n<M'$ and apply Proposition \ref{tree} and relabel to assume there exists a sequence $(a_i)_{i=1}^n \in RB_{(\ell_\infty^n)^\rr}$ such that $$|a_i-\text{Re\ }x^*_t(x_{t|_i})|<\delta$$ for all $t\in D^n$ and $1\leqslant i\leqslant n$.  Then $$M'<n\delta+\sum_{i=1}^n a_i.$$  Now an easy induction proof yields that for any $0\leqslant i\leqslant n$ and any $t\in \{\varnothing\}\cup D^{\leqslant i}$,  there exists $x^*_t\in s_{a_{i+1}-2\delta}\ldots s_{a_n-2\delta}(K)$ such that if $\varnothing<s\leqslant t$, $\text{Re\ }x^*_t(x_s)\geqslant a_{|s|}-\delta$. In particuclar, $x^*_\varnothing\in s_{a_1-2\delta}\ldots s_{a_n-2\delta}(K)$.  Since $$\sum_{i=1}^n (a_i-2\delta) > M'-3\delta n >M,$$ this shows that $K$ does not have $M$-summable Szlenk index.

$(ii)$  Assume that $K$ does not have $M$-summable Szlenk index.  Then there exist $\ee_1, \ldots, \ee_n>0$ such that $s_{\ee_1}\ldots s_{\ee_n}(K)\neq \varnothing$ and $\sum_{i=1}^n \ee_i=M'>M$. Fix $\delta>0$ such that $M'-\delta n>M$.     Let $D$ be a weak neighborhood basis at $0$ in $X$ and let $N$ be a weak$^*$-neighborhood basis at $0$ in $X^*$.  Then by standard techniques, we may fix $(x^*_t)_{t\in \{\varnothing\}\cup N^{\leqslant n}}\subset K$ such that for each $t\in \{\varnothing\}\cup D^{\leqslant n-1}$, $\text{weak}^*$-$\lim_{v\in N}x^*_{t\smallfrown v}=x^*_t$ and for each $v\in N$, $\|x^*_{t\smallfrown v}-x^*_t\|>\ee_{|t|+1}/2$.  Now we may define a map $\phi:D^{\leqslant n}\to N^{\leqslant n}$ and a weakly null collection $(x_t)_{t\in D^{\leqslant n}}\subset B_X$ such that $\text{Re\ }x^*_{\phi(t)}(x_s)\geqslant (\ee_{|s|}-\delta)/4$ for any $\varnothing<s\leqslant t$.  In particular, $$\inf_{t\in D^n} r_K(\sum_{i=1}^n x_{t|_i}) \geqslant \inf_{t\in D^n} \text{Re\ }x_{\phi(t)}(\sum_{i=1}^n x_{t|_i}) \geqslant \frac{1}{4}\Bigl(\sum_{i=1}^n \ee_i - n\delta\Bigr)>M/4.$$  This shows that $\Sigma(K)>M/4$.

\end{proof}

\begin{corollary} Let $X$ be a Banach space and let $K\subset X^*$ be weak$^*$-compact.  Then $K$ has summable Szlenk index if and only if $\Sigma(K)<\infty$ if and only if $\overline{\text{\emph{abs\ co}}}^{\text{\emph{weak}}^*}(K)$ has summable Szlenk index. 

\label{main corollary}

\end{corollary}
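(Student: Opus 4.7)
The plan is to derive Corollary \ref{main corollary} directly from Lemma \ref{tonytodd} together with part $(vii)$ of Proposition \ref{tech1}, since these together encode precisely the two equivalences in the statement. No new combinatorial work is needed; the corollary is essentially a formal consequence.

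First I would establish the equivalence between $K$ having summable Szlenk index and $\Sigma(K)<\infty$. For the forward direction, if $K$ has $M$-summable Szlenk index for some $M\geqslant 0$, then Lemma \ref{tonytodd}$(i)$ gives $\Sigma(K)\leqslant M<\infty$. Conversely, if $\Sigma(K)<\infty$, pick any $M\geqslant 4\Sigma(K)$; then $\Sigma(K)\leqslant M/4$ and Lemma \ref{tonytodd}$(ii)$ yields that $K$ has $M$-summable Szlenk index, hence summable Szlenk index by definition.

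For the second equivalence, I would invoke Proposition \ref{tech1}$(vii)$, which asserts $\Sigma_n(\overline{\text{abs co}}^{\text{weak}^*}(K))=\Sigma_n(K)$ for each $n\in\nn$. Taking the supremum over $n$ gives $\Sigma(\overline{\text{abs co}}^{\text{weak}^*}(K))=\Sigma(K)$, so one is finite if and only if the other is. Combined with the first equivalence (applied to both $K$ and its absolutely convex weak$^*$-closed hull), this chains together to yield the full biconditional in the corollary.

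There is no real obstacle here; the only mild subtlety is just bookkeeping the two-sided quantitative relationship (the factor of $4$) between $\Sigma(K)$ and the summability constant $M$ in Lemma \ref{tonytodd}, which is why one must allow different constants $M$ on the two sides of the equivalence and not try to transport a single constant verbatim. Because the statement only concerns the finiteness of the Szlenk summability constant and of $\Sigma$, the loss of constants is harmless.
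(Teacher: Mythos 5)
Your proof is correct and matches the paper's implicit derivation: the corollary is stated without a separate proof precisely because it is the immediate formal consequence of Lemma \ref{tonytodd} (both parts, giving the first equivalence up to a factor of $4$) and Proposition \ref{tech1}$(vii)$ (giving $\Sigma(\overline{\text{abs co}}^{\text{weak}^*}(K))=\Sigma(K)$ by taking the supremum over $n$). Your observation that the factor of $4$ in Lemma \ref{tonytodd}$(ii)$ is harmless because only finiteness is at stake is exactly the right bookkeeping.
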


For each operator $A$, let $\mathfrak{s}(A)=\|A\|+\Sigma(A)$ and let $\mathfrak{S}$ denote the class of all operators with summable Szlenk index. Note that by Corollary \ref{main corollary}, $\mathfrak{S}$ is the class of all operators $A$ such that $\mathfrak{s}(A)<\infty$.

\begin{theorem} The class $(\mathfrak{S}, \mathfrak{s})$ is a Banach ideal.

\label{ideal}

\end{theorem}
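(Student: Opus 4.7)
The plan is to split the Banach ideal statement into three tasks: (a) $\mathfrak{S}$ is an ideal, (b) $\mathfrak{s}$ is an ideal norm, and (c) each $(\mathfrak{S}(X,Y),\mathfrak{s})$ is norm-complete. Tasks (a) and (b) follow by upgrading the qualitative content of Proposition \ref{tech1} to quantitative operator-level inequalities; task (c) reduces to lower semicontinuity of $\Sigma$ under the operator norm.

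The key estimate behind (a) and (b) is $\Sigma_n(C^* K) \leqslant \|C\|\,\Sigma_n(K)$ for $C\in \mathfrak{L}(W,X)$ and weak${}^*$-compact $K\subseteq X^*$; this follows from $r_{C^* K}(w) = r_K(Cw)$ together with the fact that $(Cw_t/\|C\|)_{t\in D^{\leqslant n}}$ is weakly null in $B_X$ whenever $(w_t)$ is weakly null in $B_W$ (the $\|C\| = 0$ case is trivial). Combining this with $(ABC)^* B_{Z^*}\subseteq \|A\|\, C^* B^* B_{Y^*}$, the positive-scaling identity $\Sigma_n(cK)=c\Sigma_n(K)$, and the evident monotonicity $K\subseteq L\Rightarrow \Sigma_n(K)\leqslant \Sigma_n(L)$ yields $\Sigma(ABC)\leqslant \|A\|\|C\|\Sigma(B)$, hence $\mathfrak{s}(ABC)\leqslant \|A\|\|C\|\mathfrak{s}(B)$, giving the ideal composition property. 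Subadditivity $\Sigma(A+B)\leqslant \Sigma(A)+\Sigma(B)$ follows from $(A+B)^* B_{Z^*}\subseteq A^* B_{Z^*}+B^* B_{Z^*}$ via Proposition \ref{tech1}(iii); scalar homogeneity $\Sigma(\lambda A) = |\lambda|\Sigma(A)$ is obtained by writing $\lambda = |\lambda|\zeta$ with $\zeta$ unimodular and invoking Proposition \ref{tech1}(iv) and positive scaling. Together these make $\mathfrak{s}$ a norm on each $\mathfrak{S}(X,Y)$ (positive definiteness from $\mathfrak{s}\geqslant \|\cdot\|$). The rank-one axiom $\mathfrak{s}(x\otimes y) = \|x\|\|y\|$ and $I_\mathbb{K}\in \mathfrak{S}$ follow because the relevant adjoint image sits in a one-dimensional subspace of the dual and is therefore norm compact, so Proposition \ref{tech1}(i) gives $\Sigma = 0$.

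For completeness, given an $\mathfrak{s}$-Cauchy sequence $(A_k)\subseteq \mathfrak{S}(X,Y)$ with operator-norm limit $A\in \mathfrak{L}(X,Y)$, I would prove the lower semicontinuity statement: if $B_m\to B$ in operator norm, then $\Sigma(B)\leqslant \liminf_m \Sigma(B_m)$. Indeed, using $r_{B^* B_{Y^*}}(x) = \|Bx\|$, if $\Sigma(B) > M$ there exist $n$ and a weakly null $(x_t)_{t\in D^{\leqslant n}}\subseteq B_X$ with $\inf_{t\in D^n}\|B\sum_{i=1}^n x_{t|_i}\| > M$; since $\|\sum_{i=1}^n x_{t|_i}\|\leqslant n$, whenever $\|B_m - B\| < \ee/n$ one has $\inf_t \|B_m\sum_i x_{t|_i}\| > M - \ee$, so $\Sigma_n(B_m) > M-\ee$. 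Applying this to $B=A-A_k$ and $B_m=A_m-A_k$ gives $\mathfrak{s}(A-A_k)\leqslant \liminf_m \mathfrak{s}(A_m-A_k)$, whence the Cauchy condition forces $A_k\to A$ in $\mathfrak{s}$; membership $A\in \mathfrak{S}$ follows since $A-A_k\in \mathfrak{S}$ for large $k$ and $\mathfrak{S}$ is a vector space. The main obstacle is the submultiplicativity estimate $\Sigma_n(C^* K)\leqslant \|C\|\Sigma_n(K)$---everything else is either a quantitative reading of Proposition \ref{tech1} or the $\ee/n$ approximation argument just described.
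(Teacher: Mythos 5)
Your proposal is correct and follows essentially the same route as the paper: subadditivity and homogeneity of $\Sigma$ via Proposition \ref{tech1}, the composition estimate $\Sigma_n(ABC)\leqslant\|A\|\Sigma_n(B)\|C\|$ via the observation that $C$ maps weakly null collections in $B_W$ to weakly null collections in $\|C\|B_X$, the rank-one axiom via norm-compactness, and completeness via the Fatou-type inequality $\Sigma(B)\leqslant \liminf_m\Sigma(B_m)$, which the paper derives from the bound $\Sigma_n(A-A_k)\leqslant n\|A-A_k\|$. The only cosmetic differences are that you phrase the composition estimate at the level of weak$^*$-compact sets ($\Sigma_n(C^*K)\leqslant\|C\|\Sigma_n(K)$, together with monotonicity and the inclusion $(ABC)^*B_{Z^*}\subseteq\|A\|\,C^*B^*B_{Y^*}$) whereas the paper normalizes $\|A\|=\|C\|=1$ and computes directly with the operator, and that you isolate lower semicontinuity as a named lemma whereas the paper carries out the $\limsup$ chain inline.
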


\begin{proof} Fix $X,Y\in \textbf{Ban}$ and note that by Proposition \ref{tech1} and the positive homogeneity of $\Sigma$, $\Sigma$ is a seminorm on $\mathfrak{S}(X,Y)$. From this we can deduce that $(\mathfrak{S}(X,Y), \mathfrak{s})$ is a normed space.  

Now fix $W,Z\in \textbf{Ban}$, $C:W\to X$, $B:X\to Y$, and $A:Y\to Z$ with $\|A\|=\|C\|=1$.  Fix $n\in \nn$ and a weakly null $(x_t)_{t\in D^{\leqslant n}}\subset B_W$.   Then $(Cw_t)_{t\in D^{\leqslant n}}\subset B_X$ is weakly null, and $$\inf_{t\in D^n}\|ABC\sum_{i=1}^n w_{t|_i}\| \leqslant \inf_{t\in D^n} \|B\sum_{i=1}^n Cw_{t|_i}\|\leqslant \Sigma_n(B).$$    Thus $\Sigma_n(ABC)\leqslant \Sigma_n(B)$.  By homogeneity, we deduce that $\Sigma_n(ABC)\leqslant \|A\|\Sigma_n(B)\|C\|$ and $\mathfrak{s}(ABC)\leqslant \|A\|\mathfrak{s}(B)\|C\|$ for any $C:W\to X$ and $A:Y\to Z$. 

Next, since $\Sigma(A)=0$ for any compact operator, $\mathfrak{S}$ contains all finite rank operators and $\mathfrak{s}(x\otimes y)= \|x\otimes y\|=\|x\|\|y\|$ for each $x\in X$ and $y\in Y$. 

It remains to show that $(\mathfrak{S}(X,Y), \mathfrak{s})$ is complete.  To that end, fix a $\mathfrak{s}$-Cauchy sequence $(A_k)_{k=1}^\infty$ in $\mathfrak{S}(X,Y)$.  Since $(A_k)_{k=1}^\infty$ is also norm Cauchy, it is norm convergent to some $A$.  Since $\Sigma_n(A-A_k)\leqslant n \|A-A_k\|$ for any $n,k\in \nn$, it follows that $$\Sigma(A)=\underset{n}{\ \sup\ } \Sigma_n(A) \leqslant \underset{n}{\ \sup\ } \underset{k}{\ \lim\sup\ } \Sigma_n(A_k) \leqslant \underset{k}{\ \lim\sup\ } \Sigma(A_k)<\infty$$ and $$\underset{n}{\ \lim\sup\ } \Sigma(A-A_n) \leqslant \underset{n}{\ \lim\sup\ }\underset{k}{\ \lim\sup\ } \Sigma(A_k-A_n)=0.$$

\end{proof}

\begin{rem}\upshape The class $\mathfrak{S}$ is not a closed ideal. Indeed, let $X_n$ be the completion of $c_{00}$ with respect to the norm $$\|\sum_{i=1}^\infty a_i e_i\|_{X_n}= \max\Bigl\{\sum_{i\in T} |a_i|: |T|=n\Bigr\}.$$  It is quite clear that $\Sigma(X_n)=n$, so that $A:(\oplus_{n=1}^\infty X_n)_{c_0}\to (\oplus_{n=1}^\infty X_n)_{c_0}$ given by $A|_{X_n}=n^{-1/2}I_{X_n}$ quite obviously fails to have summable Szlenk index, but is the norm limit of operators which have summable Szlenk index.

\end{rem}

\section{Embedding}

The equivalence of $(i)$ and $(iii)$ of the next theorem is no doubt known to specialists.  We are unaware of any mention of this fact in the literature, and we will need it for later results, so we include it here.

\begin{theorem} Let $A:X\to Y$ be an operator.   The following are equivalent.  \begin{enumerate}[(i)]\item $\Sigma(A)<\infty$. \item $A$ has summable Szlenk index.

\vspace{5mm}

\noindent Furthermore, if $A=I_X$ and $\dim X=\infty$,  each of the above is equivalent to \item $X$ is Asymptotic $c_0$.

\vspace{5mm}

\noindent Finally, if $A=I_X$, $\dim X=\infty$,  and $X$ has a shrinking FDD $E$, each of the above is equivalent to \item There exists a blocking $F$ of $E$ which is asymptotic $c_0$ in $X$. 

\end{enumerate} 

\label{main thm}

\end{theorem}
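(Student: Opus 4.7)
The proof divides into three stages. First, $(i) \Leftrightarrow (ii)$ is immediate from Corollary \ref{main corollary}. For the two ``furthermore'' clauses we prove $(i) \Leftrightarrow (iii)$ and then $(iii) \Leftrightarrow (iv)$, under the running assumption that $A=I_X$ and $\dim X=\infty$ (and, for the last, that $X$ has a shrinking FDD $E$).

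For $(i)\Rightarrow (iii)$, fix $M\geqslant \Sigma(X)$, $n\in \nn$, $|\cdot|\in \{X\}_n$, and $b>1$. Choose a weakly null $(x_t)_{t\in D^{\leqslant n}}\subset S_X$ realizing $|\cdot|$ to within factor $b$, and apply Proposition \ref{tech1}(viii) with $K=B_{X^*}$ (so $r_K=\|\cdot\|$) to obtain a pruning $\phi$ for which $\|\sum_{i=1}^n a_i x_{\phi(t)|_i}\|\leqslant M+\delta$ uniformly over $t\in D^n$ and $(a_i)_{i=1}^n\in B_{\ell_\infty^n}$. The defining inequality of $\{X\}_n$ then forces $|\sum a_i e_i|\leqslant b(M+\delta)$; letting $b\downarrow 1$ and $\delta\downarrow 0$ yields the $\ell_\infty^n$-upper estimate with constant $M$, which combined with the free lower bound gives Asymptotic $c_0$. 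For $(iii)\Rightarrow (i)$, start from a normalized weakly null $(y_t)_{t\in D^{\leqslant n}}\subset S_X$ and regard each branch seminorm $|\sum a_i e_i|_t := \|\sum a_i y_{t|_i}\|$ (for $t\in D^n$) as a point in the compact metric space of seminorms on $\mathbb{K}^n$ dominated by $\|\cdot\|_1$ and satisfying $|e_i|=1$. Applying Proposition \ref{tree} with sufficiently fine approximation parameters (plus a diagonal argument to accommodate the quantifier ``for every $b>1$'' in the definition of $\{X\}_n$), extract a pruning $\phi$ along which these seminorms cluster around a common limit norm $|\cdot|\in \{X\}_n$; the Asymptotic $c_0$ hypothesis then gives $\|\sum y_{\phi(t)|_i}\|\leqslant C+\delta$ for all branches, whence $\Sigma_n(X)\leqslant C$ for every $n$ and thus $\Sigma(X)\leqslant C$.

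For $(iii)\Leftrightarrow (iv)$, direction $(iv)\Rightarrow (iii)$ is a routine sliding-hump argument: block sequences with respect to any blocking $F$ of $E$ are weakly null by shrinkingness of $E$, and conversely any weakly null tree in $S_X$ can be pruned to an approximate $F$-block tree whose supports grow fast enough along each branch, at which point the asymptotic $c_0$ constant of $F$ bounds $|\sum a_i e_i|$ for any $|\cdot|\in \{X\}_n$ realized by the tree. The main obstacle is $(iii)\Rightarrow (iv)$, modeled on \cite{OSZ}. Inductively choose $0=m_0<m_1<\cdots$ and set $F_n := \oplus_{i\in (m_{n-1}, m_n]} E_i$, taking $m_n$ large enough that every $F$-block sequence of length at most $n$ beginning past level $n$ satisfies the upper $\ell_\infty$ estimate with constant $C+\ee_n$, for a fixed $\ee_n\downarrow 0$. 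The crucial point is the existence of such a finite $m_n$ at each stage: were this to fail, one could extract increasingly long normalized $F$-block sequences whose $c_0$ estimate degrades, and weak nullity (of $F$-block sequences) together with a compactness argument on the possible support configurations would produce an element of some $\{X\}_k$ violating Asymptotic $c_0$, contradicting $(iii)$.
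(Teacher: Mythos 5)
Your treatment of $(i)\Leftrightarrow(ii)\Leftrightarrow(iii)$ is essentially the paper's: $(i)\Leftrightarrow(ii)$ via Corollary \ref{main corollary}, and the link to $(iii)$ via the pruning mechanism of Proposition \ref{tech1}(viii) (you should also invoke (ix) explicitly, since $\Sigma_n$ is defined over $B_X$-trees while $\{X\}_n$ is defined via $S_X$-trees; (ix) is precisely what licenses passing between the two when $\dim X=\infty$). The compactness-of-seminorms argument you sketch for $(iii)\Rightarrow(i)$ is a reasonable unpacking of what the paper leaves implicit, though the ``diagonal argument'' needed to guarantee that your cluster seminorm actually belongs to $\{X\}_n$ (i.e.\ is realizable to within every $b>1$, not just one fixed tolerance) deserves more than a parenthetical.

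The genuine gap is in $(iii)\Rightarrow(iv)$. Your inductive construction is not well-posed: you propose to choose $m_n$ ``large enough that every $F$-block sequence of length at most $n$ beginning past level $n$ satisfies the upper $\ell_\infty$ estimate,'' but the $F$-blocks beyond level $n$ are determined by $m_{n+1},m_{n+2},\dots$, which have not yet been chosen at stage $n$. More importantly, even after one reformulates the inductive condition purely in terms of $E$-supports, the claim that a suitable finite $m_n$ exists at each stage is exactly where the real work lies, and it does not follow from ``weak nullity plus compactness on support configurations.'' The obstruction is that Asymptotic $c_0$ is a \emph{game}/tree condition (at each step you may react to the opponent's last move by going further out), whereas asymptotic $c_0$ of a fixed blocking $F$ is a \emph{uniform} condition on all sufficiently-far-out consecutive-interval block sequences of a blocking chosen once and for all; converting the former to the latter requires a stabilization argument. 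The paper handles this by introducing the family $\mathcal{V}$ of ``bad'' infinite subsets $M\subset\nn$ and invoking the Ramsey-type stabilization from \cite[Theorem 3.3]{OS} to produce a single infinite $M$ no infinite subset of which lies in $\mathcal{V}$; the good blocking is then $F^M$, and the even/odd splitting trick extends the estimate from length-$n$ skipped blocks to all lengths. Your proof needs to either reproduce this $\mathcal{V}$/Ramsey step or replace it with an equally careful diagonalization; as written it assumes the conclusion of that step without proving it.
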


\begin{proof}[Proof of Theorem \ref{main thm}] The equivalence of $(i)$ and $(ii)$ comes from Corollary \ref{main corollary}.  The equivalence $(ii)\Rightarrow (iii)$ follows from Proposition \ref{tech1}$(viii)$ and $(ix)$.

Assume $A=I_X$, $\dim X=\infty$, and $E$ is a shrinking FDD for $X$. Fix $C\geqslant 1$ such that $|\sum_{i=1}^n e_i|\leqslant C$ for every $n\in \nn$ and $|\cdot|\in \{X\}_n$.   Fix $C_1>C$.  For an infinite subset $M$ of $\nn$, if $M=\{m_1, m_2, \ldots\}$ with $m_1<m_2<\ldots$ and $m_0=0$, let $F^M$ is the blocking of $E$ such that $F^M_n=\oplus_{j=m_{n-1}+1}^{m_n}F_j$.   Let $\mathcal{V}$ denote the set of those infinite subsets $M$ of $\nn$ such that there exists $(x_i)_{i=1}^n\in B_X\cap \prod_{i=1}^n F^M_{2i}$ with $\|\sum_{i=1}^n x_i\|\geqslant C_1$. Arguing as in \cite[Theorem $3.3$]{OS}, we deduce the existence of some infinite subset $M$ of $\nn$ such that for any infinite subset $N$ of $M$, $N\notin \mathcal{V}$. From the definition of $\mathcal{V}$, if $(x_i)_{i=1}^{2n}\subset B_X$ is any block sequence of $F^M$, then $$\|\sum_{i=1}^{2n} x_i\|\leqslant \|\sum_{i=1}^n x_{2i-1}\|+\|\sum_{i=1}^n x_{2i}\|\leqslant 2C_1.$$  Since we may do this for any $n$, a standard diagonalization procedure yields that $(iii)\Rightarrow (iv)$. 

 Last, $(iv)\Rightarrow (iii)$ is obvious.

\end{proof}

The following result provides a negative solution to  a conjecture from \cite{GKL}.

\begin{corollary} There exists an $\ell_1$ predual which has summable Szlenk index but contains no isomorph of $c_0$. 

\end{corollary}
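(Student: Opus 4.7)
The plan is to exhibit a Bourgain--Delbaen style $\ell_1$ predual whose natural FDD is asymptotic $c_0$ in the space. Since Bourgain--Delbaen spaces are built, by design, to be $\ell_1$ preduals avoiding isomorphic copies of $c_0$, combining this construction with Theorem \ref{main thm} (in particular, the implication $(iv)\Rightarrow (ii)$, which says that a space with a shrinking asymptotic $c_0$ FDD has summable Szlenk index) yields the desired example. The relevant space is already available in the literature: refinements of the Bourgain--Delbaen construction (e.g.\ by Haydon, or by Freeman--Odell--Schlumprecht--Zs\'ak) produce separable $\ell_1$ preduals with a shrinking FDD satisfying an asymptotic $c_0$ upper estimate.

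Concretely, I would proceed in three steps. First, invoke the Bourgain--Delbaen framework: one constructs a separable Banach space $X$ as the norm closure in $\ell_\infty(\Gamma)$ of the span of an inductively defined family of extension functionals on a countable index set $\Gamma$, arranged so that $X^* \cong \ell_1$ and $X$ admits a shrinking FDD $E=(E_n)$. Second, choose the extension parameters small enough so that the FDD satisfies the asymptotic $c_0$ upper estimate
\[
\Bigl\|\sum_{i=1}^n x_i\Bigr\| \leqslant C\max_{1\leqslant i\leqslant n}\|x_i\|
\]
for every $n\leqslant k_0<\cdots<k_n$ and every block sequence $x_i\in \oplus_{j=k_{i-1}}^{k_i-1}E_j$. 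Third, apply Theorem \ref{main thm}$(iv)\Rightarrow(ii)$ to conclude that $X$ has summable Szlenk index, and quote the standard Bourgain--Delbaen argument (exploiting the specific interlocking of the extension functionals to rule out normalized weakly null sequences equivalent to the $c_0$ basis) to conclude that $X$ contains no isomorph of $c_0$.

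The main obstacle is the technical verification in the second step, namely that the Bourgain--Delbaen parameters can be tuned so that the FDD is genuinely asymptotic $c_0$ without destroying either the $\ell_1$ predual property or the absence of $c_0$. This is the bookkeeping-heavy part of the argument, and it would be handled by appealing to the parameter analyses already carried out in the cited constructions; once that is settled, the rest of the corollary is a one-line invocation of the characterization established earlier in this paper.
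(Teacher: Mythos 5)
Your approach is essentially the one the paper takes: produce a Bourgain--Delbaen type $\mathcal{L}_\infty$ space with $X^*\cong\ell_1$, no isomorph of $c_0$, and an FDD that is asymptotic $c_0$ in $X$, then invoke the characterization of summable Szlenk index via an asymptotic $c_0$ FDD (Theorem~\ref{main thm}). The only real difference is that you leave the ``second step'' (tuning the Bourgain--Delbaen parameters so that the FDD is genuinely asymptotic $c_0$ while retaining $X^*\cong\ell_1$ and excluding $c_0$) as an open bookkeeping issue, referring loosely to Haydon and to Freeman--Odell--Schlumprecht--Zs\'ak, neither of which states precisely the needed fact. The paper closes this gap by citing an explicit result that does exactly this: \cite[Proposition~5.7]{AGM} (Argyros--Gasparis--Motakis) produces a separable $\mathcal{L}_\infty$ space $X$ with an FDD $E$ which is asymptotic $c_0$ in $X$, with $X^*\cong\ell_1$, and with $X$ containing no isomorph of $c_0$. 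With that citation in place, your argument is complete and coincides with the paper's proof.
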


\begin{proof} By \cite[Proposition $5.7$]{AGM}, there exists a $\mathcal{L}_\infty$ Banach space $X$ with FDD $E$ such that $E$ is asymptotic $c_0$ in $X$ and such that $X$ contains no isomorph of $c_0$ and $X^*$ is isomorphic to $\ell_1$. This space $X$ has summable Szlenk index.

\end{proof}

The following result generalizes a theorem from \cite{OSZ}, where it was shown that any separable, reflexive, Asymptotic $c_0$ space embeds into a Banach space with $Z$ with FDD $E$ such that $E$ is asymptotic $c_0$ in $Z$. 

\begin{theorem} Let $X$ be a separable Banach space.  Then $X$ is Asymptotic $c_0$ if and only if there exists a Banach space $Z$ with FDD $E$ such that $E$ is asymptotic $c_0$ in $Z$ and $Z$ is isometric to a subspace of $Z$. Moreover, if $X$ is reflexive, $Z$ can be taken to be reflexive. 

\end{theorem}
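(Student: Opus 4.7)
The ``if'' direction is easy. Suppose $X$ embeds isometrically into a Banach space $Z$ carrying an FDD $E$ that is asymptotic $c_0$ in $Z$. Such an FDD is automatically shrinking: if it were not, $Z$ would contain a normalized block sequence equivalent to the $\ell_1$-basis, contradicting the $c_0$-upper estimate on suitably spread-out blocks. Thus the hypotheses of Theorem~\ref{main thm}$(iv)\Rightarrow(iii)$ are met and $Z$ is Asymptotic $c_0$. Since any weakly null collection in $S_X$ is also weakly null in $S_Z$, we have $\{X\}_n\subseteq \{Z\}_n$, and because only the upper $\ell_\infty^n$-inequality needs checking (as noted following the definition of Asymptotic $c_0$), $X$ inherits the Asymptotic $c_0$ constant from $Z$.

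For the ``only if'' direction, suppose $X$ is separable and Asymptotic $c_0$. Theorem~\ref{main thm} yields $Sz(X)\leqslant \omega$, so in particular $X$ is Asplund and $X^*$ is separable. The strategy is to (a) embed $X$ isometrically into a Banach space $Z_0$ carrying a shrinking FDD $E$ in such a way that $Z_0$ itself has summable Szlenk index; then (b) apply the implication $(ii)\Rightarrow(iv)$ of Theorem~\ref{main thm} to $Z_0$ to extract a blocking $F$ of $E$ that is asymptotic $c_0$ in $Z_0$. Setting $Z:=Z_0$ equipped with $F$ then completes the construction, with $X$ sitting isometrically inside $Z$. For the ``moreover'' clause, one arranges $Z_0$ reflexive in step (a) by using Zippin's isometric embedding theorem; any blocking of an FDD for a reflexive space remains an FDD for the same (reflexive) space.

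The substantive point is step (a). I would carry it out by invoking an isometric version of the Freeman--Odell--Schlumprecht--Zs\'ak-type embedding theorems: since $X$ is separable, Asplund, and has summable Szlenk index, a refinement of the tree/Schreier-style constructions used in \cite{OSZ} (built from a weak$^*$-dense sequence in $B_{X^*}$) produces an isometric embedding of $X$ into a Banach space $Z_0$ with shrinking FDD, and the same construction can be tuned so that the summability of the Szlenk index propagates from $X$ to $Z_0$—the key being that finitely many coordinates of the FDD suffice to control each Szlenk derivation, and the $M$-summability constant of $X$ can be carried through this localization. This step is the main obstacle: isomorphic embeddings into spaces with shrinking FDDs are classical for separable Asplund spaces, but the isometric refinement together with quantitative preservation of summability requires care. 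Once (a) is in hand, step (b) is a direct appeal to Theorem~\ref{main thm}, and combined with the easy direction this closes the proof.
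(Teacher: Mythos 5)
Your high-level strategy for the ``only if'' direction is the same as the paper's: embed $X$ into a space $Z$ with shrinking FDD, show $Z$ has summable Szlenk index, then invoke Theorem~\ref{main thm}~$(ii)\Rightarrow(iv)$ to extract a blocking that is asymptotic $c_0$ in $Z$. You also correctly identify that the crux is step (a), propagating summability from $X$ to the ambient space $Z$. However, this is exactly where your proposal has a genuine gap, and the gap is the whole point of the theorem's placement in this paper.

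You suggest filling step (a) by ``tuning'' a Zippin/Schlumprecht-type construction so that the $M$-summability constant of $X$ is ``carried through'' the FDD localization. But this direct approach is not what makes the argument work, and it is unclear it can be made to work at all: the unit ball $B_{Z^*}$ is much larger than the image of $B_{X^*}$, and there is no easy way to control its Szlenk derivations coordinate-by-coordinate. The paper's actual argument uses the specific structure delivered by Schlumprecht's embedding theorem \cite{Sch}: there is a subset $\mathbb{B}\subset B_{Z^*}$ with $\overline{\text{abs co}}^{\text{weak}^*}(\mathbb{B})=B_{Z^*}$, a weak$^*$-compact $B\subset B_{X^*}$, and a map $I^*$ with
\[
I^*\bigl(s_{\ee_1}\cdots s_{\ee_n}(\mathbb{B})\bigr)\subset s_{\ee_1/c}\cdots s_{\ee_n/c}(B).
\]
This gives summable Szlenk index for $\mathbb{B}$ directly from that of $B\subset B_{X^*}$. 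One then needs to pass from $\mathbb{B}$ to $B_{Z^*}=\overline{\text{abs co}}^{\text{weak}^*}(\mathbb{B})$, and this is precisely where Theorem~\ref{mor} (equivalently, Corollary~\ref{main corollary}) is used: summability of the Szlenk index of a weak$^*$-compact set is inherited by its weak$^*$-closed absolutely convex hull. Your proposal never invokes Theorem~\ref{mor} at all, so the essential step is left entirely to an unsubstantiated ``the construction can be tuned.'' Without Theorem~\ref{mor}, passing from a norming subset to the full dual ball is a nontrivial obstruction, not a routine localization. (Your ``if'' direction is fine, though the paper doesn't bother to spell it out; and note the theorem statement has a typo---it should read ``$X$ is isometric to a subspace of $Z$.'')
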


\begin{proof} By \cite{Sch}, there exists a weak$^*$-compact set $B\subset B_{X^*}$ and a Banach space $Z$ with shrinking FDD $E$ such that $X$ embeds isomorphically into $Z$ and such that $Z$ is reflexive if $X$ is. Furthermore, there exist a subset $\mathbb{B}\subset B_{Z^*}$ such that $\overline{\text{abs\ co}}^{\text{weak}^*}(\mathbb{B})=B_{Z^*}$, a constant $c>0$, and a map $I^*:Z^*\to X^*$ such that $$I^*(s_{\ee_1}\ldots s_{\ee_n}(\mathbb{B}))\subset s_{\ee_1/c}\ldots s_{\ee_n/c}(B).$$  Each of these properties except the last comes from the construction of the space $Z$. The last property follows from an inessential modification of \cite[Lemma $5.5$]{Sch}. If $X$ has summable Szlenk index, so does $B$, and therefore so does $\mathbb{B}$. By Corollary \ref{main corollary}, $B_{Z^*}=\overline{\text{abs\ co}}^{\text{weak}^*}(\mathbb{B})$ has summable Szlenk index as well.  This means $Z$ is Asymptotic $c_0$, and therefore some blocking of $E$ is asymptotic $c_0$ in $Z$.

\end{proof}

\section{Injective tensor products}

Let us recall that the injective tensor product is the closed span  in $\mathfrak{L}(Y^*, X)$ of the operators $x\otimes y:Y^*\to X$, where $x\otimes y(y^*)= y^*(y)x$. For $i=0,1$, if $A_i:X_i\to Y_i$ is an operator, we may define the operator $A_0\otimes A_1:X_0\hat{\otimes}_\ee X_1\to Y_0\hat{\otimes}_\ee Y_1$. This operator is given by $A_0\otimes A_1 (u)= A_0 u A_1^*:Y^*_1\to Y_0$.  Given subsets $K_0\subset X^*_0$, $K_1\subset X^*_1$, we let $$[K_0, K_1]=\{x^*_0\otimes x^*_1: x^*_0\in K_0, x^*_1\in K_1\}\subset (X_0\hat{\otimes}_\ee X_1)^*.$$

\begin{proposition} Let $J$ be a finite set.  Suppose that $R>0$ and for each $i=0,1$ and $j\in J$, $K_{i,j}\subset R B_{X^*_i}$ is a weak$^*$-compact set. Then for any $\ee_1, \ldots, \ee_n\in \rr$ and any $n\in \{0\}\cup \nn$, $$s_{\ee_1}\ldots s_{\ee_n}\Bigl(\bigcup_{j\in J}[K_{0,j}, K_{1,j}]\Bigr)\subset \bigcup_{j\in J,(k_i)_{i=1}^n\in \{0,1\}^n} [s_{\ee_1/4R}^{k_1}\ldots s_{\ee_n/4R}^{k_n}(K_{0,j}), s_{\ee_1/4R}^{1-k_1}\ldots s_{\ee_n/4R}^{1-k_n}(K_{1,j})].$$   

\label{tensor}
\end{proposition}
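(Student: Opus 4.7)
The plan is to induct on $n$. The case $n=0$ is the identity inclusion. The inductive step reduces to the $n=1$ case applied to an enlarged but still finite index family: applying the inductive hypothesis to $s_{\ee_2}\cdots s_{\ee_n}(\bigcup_{j \in J}[K_{0,j}, K_{1,j}])$ yields containment in a union $\bigcup_{j' \in J \times \{0,1\}^{n-1}}[K'_{0,j'}, K'_{1,j'}]$, where each $K'_{i,j'}$ is a weak$^*$-Szlenk derivative of some $K_{i,j}$ and hence a weak$^*$-compact subset of $RB_{X^*_i}$. The monotonicity of $s_{\ee_1}$ together with the $n=1$ case applied to this new finite family then produces the desired inclusion.

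For the $n=1$ claim, I fix $z^* \in K := \bigcup_{j \in J}[K_{0,j}, K_{1,j}]$ that fails to lie in the RHS and aim to exhibit a weak$^*$-open neighborhood $V$ of $z^*$ in $(X_0 \hat{\otimes}_\ee X_1)^*$ with $\mathrm{diam}(V \cap K) \leqslant \ee$, forcing $z^* \notin s_\ee(K)$. By the assumed failure, for every $j \in J_0 := \{j \in J : z^* \in [K_{0,j}, K_{1,j}]\}$ and every factorization $z^* = u^* \otimes v^*$ with $u^* \in K_{0,j}$, $v^* \in K_{1,j}$, we have both $u^* \notin s_{\ee/4R}(K_{0,j})$ and $v^* \notin s_{\ee/4R}(K_{1,j})$, so I can select weak$^*$-open sets $U_0^{u,v} \ni u$ and $U_1^{u,v} \ni v$ satisfying $\mathrm{diam}(U_i^{u,v} \cap K_{i,j}) \leqslant \ee/4R$. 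The fiber $R_j := \{(u,v) \in K_{0,j} \times K_{1,j} : u \otimes v = z^*\}$ is weak$^*$-compact (since the tensor map $\mu_j(u,v) := u \otimes v$ is weak$^*$-continuous on the bounded product), and hence is covered by finitely many cells $U_0^{u_i,v_i} \times U_1^{u_i,v_i}$ whose union I call $W_j$. The crucial quantitative step is the injective tensor norm bound $\|u \otimes v - u' \otimes v'\| \leqslant \|u - u'\|\|v\| + \|u'\|\|v - v'\|$, which together with $\|u\|, \|v\|, \|u'\|, \|v'\| \leqslant R$ yields $\|u \otimes v - z^*\| \leqslant 2R \cdot (\ee/4R) = \ee/2$ for every $(u,v) \in W_j \cap (K_{0,j} \times K_{1,j})$. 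Since $\mu_j((K_{0,j} \times K_{1,j}) \setminus W_j)$ is weak$^*$-compact and does not contain $z^*$, the Hausdorffness of the weak$^*$-topology yields a weak$^*$-open $V_j \ni z^*$ with $V_j \cap [K_{0,j}, K_{1,j}] \subset \mu_j(W_j \cap (K_{0,j} \times K_{1,j}))$, which lies in the closed $\ee/2$-ball about $z^*$. For $j \in J \setminus J_0$, weak$^*$-compactness of $[K_{0,j}, K_{1,j}]$ gives a weak$^*$-open $V_j \ni z^*$ disjoint from it. Setting $V := \bigcap_{j \in J} V_j$, which is weak$^*$-open by finiteness of $J$, any $p \in V \cap K$ lies in some $[K_{0,j}, K_{1,j}]$ with $j \in J_0$ and thus satisfies $\|p - z^*\| \leqslant \ee/2$, so $\mathrm{diam}(V \cap K) \leqslant \ee$, as desired.

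The main obstacle is the non-uniqueness of factorizations: $z^*$ may lie in $[K_{0,j}, K_{1,j}]$ for several $j \in J$ and admit many factorizations within each. The resolution combines three ingredients: the weak$^*$-compactness of each fiber $R_j$ (to reduce the uncountable family of ``non-derivative'' neighborhoods to a finite cover by small product cells), the injective tensor norm Lipschitz estimate (which is precisely what produces the constant $4R$), and the finiteness of $J$ (so that a finite intersection of the $V_j$'s remains weak$^*$-open).
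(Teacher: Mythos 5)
Your proof is correct, and it reaches the result by a genuinely different mechanism than the paper's. You reduce the inductive step to a clean $n=1$ statement applied to the enlarged (but still finite) family produced by the inductive hypothesis, and then argue contrapositively: assuming $z^*$ lies outside the right-hand side, you construct a small weak$^*$-neighborhood $V$ of $z^*$ with $\operatorname{diam}(V\cap K)\leqslant \ee$, so $z^*\notin s_\ee(K)$. The delicate point --- that $z^*$ may admit many factorizations $u^*\otimes v^*$ across several $j$ --- is handled by covering each weak$^*$-compact fiber $\mu_j^{-1}(z^*)\cap(K_{0,j}\times K_{1,j})$ by finitely many product cells of small diameter and separating $z^*$ from the image of the complement. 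The paper argues directly instead: it takes $u^*$ in the left-hand side, extracts a net $(u^*_\lambda)$ in $s_{\ee_2}\cdots s_{\ee_{n+1}}(\cup_j[K_{0,j},K_{1,j}])$ weak$^*$-converging to $u^*$ with $\|u^*-u^*_\lambda\|>\ee_1/2$, applies the inductive hypothesis to each $u^*_\lambda$ to get factorizations $u^*_\lambda=x^*_{0,\lambda}\otimes x^*_{1,\lambda}$, and passes to subnets so that the index $j$, the signature $(k_i)_{i=2}^{n+1}$, and the weak$^*$-limits of $x^*_{0,\lambda},x^*_{1,\lambda}$ all stabilize; the non-uniqueness of factorization is absorbed into the subnet step because the inductive hypothesis hands over a factorization for each term of the net, and the limit automatically factors $u^*$. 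Both arguments turn on the same quantitative estimate, namely that $\|x^*_0\otimes x^*_1-z^*_0\otimes z^*_1\|>\ee$ with all four vectors in $RB_{X^*_i}$ forces $\max\{\|x^*_0-z^*_0\|,\|x^*_1-z^*_1\|\}>\ee/2R$, which is the source of the constant $4R$. Your route is more explicitly topological (open covers and Hausdorff separation) and pays for that with the extra fiber-compactness step; the paper's route is shorter because the inductive hypothesis does the factoring for it. Both are valid.
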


\begin{proof} We induct on $n$ with the $n=0$ case true by definition.

It is easy to see that if $R>0$, $x^*_0, z^*_0\in RB_{X^*_0}$, $x^*_1, z^*_1\in RB_{X^*_1}$,  and $\|x^*_0\otimes x^*_1-z^*_0\otimes z^*_1\|>\ee$, then $$\max\{\|x^*_0-z^*_0\|, \|x^*_1-z^*_1\|\} > \ee/2R.$$  Now assume the result holds for $n$ and $$u^*\in s_{\ee_1}\ldots s_{\ee_{n+1}}\Bigl(\cup_{j\in J}[K_{0,j}, K_{1,j}]\Bigr)=s_{\ee_1}\Biggl(s_{\ee_2}\ldots s_{\ee_{n+1}}\Bigl(\cup_{j\in J}[K_{0,j}, K_{1,j}]\Bigr)\Biggr).$$ This means there exists a net $(u^*_\lambda)\subset s_{\ee_2}\ldots s_{\ee_{n+1}}\Bigl(\cup_{j\in J}[K_{0,j}, K_{1,j}]\Bigr)$ converging weak$^*$ to $u^*$ such that $\|u^*-u^*_\lambda\|>\ee_1/2$ for all $\lambda$.  By the inductive hypothesis, for each $\lambda$ there exists $j_\lambda\in J$ and $(k^\lambda_i)_{i=2}^{n+1}\in \{0,1\}^n$ such that $$u^*_\lambda\in [s_{\ee_2/4R}^{k_2^\lambda}\ldots s_{\ee_{n+1}/4R}^{k^\lambda_{n+1}}(K_{0, j_\lambda}), s_{\ee_2/4R}^{1-k_2^\lambda}\ldots s_{\ee_{n+1}/4R}^{1-k^\lambda_{n+1}}(K_{1, j_\lambda})].$$   By passing to a subnet, we may assume there exist $j\in J$ and $(k_i)_{i=2}^{n+1}\in \{0,1\}^n$ such that $j=j_\lambda$ for all $\lambda$, $k_i=k^\lambda_i$ for all $\lambda$ and $2\leqslant i\leqslant n+1$. For each $\lambda$, write $$u^*_\lambda= x^*_{0, \lambda}\otimes x^*_{1, \lambda}\in [s_{\ee_2/4R}^{k_2}\ldots s_{\ee_{n+1}/4R}^{k_{n+1}}(K_{0, j}), s_{\ee_2/4R}^{1-k_2}\ldots s_{\ee_{n+1}/4R}^{1-k_{n+1}}(K_{1, j})].$$ By passing to a subnet again, we may assume $x^*_{0, \lambda}\underset{\text{weak}^*}{\to}x^*_0\in s_{\ee_2/4R}^{k_2}\ldots s_{\ee_{n+1}/4R}^{k_{n+1}}(K_{0, j})$, $x^*_{1, \lambda}\underset{\text{weak}^*}{\to} x^*_1\in s_{\ee_2/4R}^{1-k_2}\ldots s_{\ee_{n+1}/4R}^{1-k_{n+1}}(K_{1, j})$, and  either $$\|x^*_0-x^*_{0, \lambda}\|> \ee_1/4R$$ for all $\lambda$ or $$\|x^*_1-x^*_{1, \lambda}\|>\ee_1/4R$$ for all $\lambda$. For this we are using the fact that $u^*=x^*_0\otimes x^*_1$. If $\|x^*_0-x^*_{0, \lambda}\|>\ee_1/4R$ for all $\lambda$, let $k_1=1$, and otherwise let $k_1=0$.   Then $$u^*= x^*_0\otimes x^*_1\in [s^{k_1}_{\ee_1/4R}\ldots s^{k_{n+1}}_{\ee_{n+1}/4R}(K_{0,j}), s^{1-k_1}_{\ee_1/4R}\ldots s^{1-k_{n+1}}_{\ee_{n+1}/4R}(K_{1, j})].$$

\end{proof}

\begin{corollary} Let $A_0:X_0\to Y_0$, $A_1:X_1\to Y_1$ be non-zero operators.  Then $A_0, A_1$ have summable Szlenk index if and only if $A_0, A_1$ do.  

\end{corollary}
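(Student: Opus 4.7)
The plan is to prove both directions using the identification
\[ (A_0\otimes A_1)^*B_{(Y_0\hat{\otimes}_\ee Y_1)^*} = \overline{\text{abs\ co}}^{\text{weak}^*}([A_0^*B_{Y_0^*}, A_1^*B_{Y_1^*}]),\]
which is a consequence of the standard identification $B_{(Y_0\hat{\otimes}_\ee Y_1)^*}=\overline{\text{abs\ co}}^{\text{weak}^*}([B_{Y_0^*}, B_{Y_1^*}])$, weak$^*$-continuity of $(A_0\otimes A_1)^*$, and the formula $(A_0\otimes A_1)^*(y_0^*\otimes y_1^*)=A_0^*y_0^*\otimes A_1^*y_1^*$ on elementary tensors. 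By Corollary \ref{main corollary}, it therefore suffices to show that the weak$^*$-compact set $[K_0,K_1]$ (with $K_i=A_i^*B_{Y_i^*}$) has summable Szlenk index if and only if each $K_i$ does.

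For the forward direction, I would assume $K_i$ has $M_i$-summable Szlenk index for $i=0,1$, fix $R>0$ with $K_0\cup K_1\subset RB_{X_0^*}\cup RB_{X_1^*}$, and apply Proposition \ref{tensor} with $J$ a singleton. If $\ee_1, \ldots, \ee_n>0$ satisfy $s_{\ee_1}\ldots s_{\ee_n}([K_0,K_1])\neq \varnothing$, the proposition provides $(k_i)_{i=1}^n\in\{0,1\}^n$ with
\[ s^{k_1}_{\ee_1/4R}\ldots s^{k_n}_{\ee_n/4R}(K_0)\neq \varnothing \quad\text{and}\quad s^{1-k_1}_{\ee_1/4R}\ldots s^{1-k_n}_{\ee_n/4R}(K_1)\neq \varnothing.\]
Summability of $K_0$ and $K_1$ then yields $\sum_{k_i=1}\ee_i/4R\leqslant M_0$ and $\sum_{k_i=0}\ee_i/4R\leqslant M_1$, whence $\sum_{i=1}^n\ee_i\leqslant 4R(M_0+M_1)$. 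Hence $[K_0,K_1]$ has $4R(M_0+M_1)$-summable Szlenk index.

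For the reverse direction I would use the Banach ideal structure $(\mathfrak{S},\mathfrak{s})$ established in Theorem \ref{ideal}. Assuming $A_0\otimes A_1\in \mathfrak{S}$ and, say, $A_1\neq 0$, pick $x_1\in X_1$ with $\|A_1x_1\|=1$ and $y_1^*\in B_{Y_1^*}$ with $y_1^*(A_1x_1)=1$. Define $J_0:X_0\to X_0\hat{\otimes}_\ee X_1$ by $J_0(x_0)=x_0\otimes x_1$ and $Q_0:Y_0\hat{\otimes}_\ee Y_1\to Y_0$ by $Q_0(u)=u(y_1^*)$, viewing the injective tensor product as a subspace of $\mathfrak{L}(Y_1^*,Y_0)$. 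Both are bounded operators, and one checks $Q_0\circ(A_0\otimes A_1)\circ J_0=A_0$ on elementary tensors and hence everywhere. The ideal property gives $A_0\in\mathfrak{S}$; a symmetric argument (with the roles of $0$ and $1$ swapped, using nonzeroness of $A_0$) gives $A_1\in\mathfrak{S}$.

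The only place I expect any friction is setting up the identification of $(A_0\otimes A_1)^*B_{(Y_0\hat{\otimes}_\ee Y_1)^*}$ as the weak$^*$-closed absolutely convex hull of $[K_0,K_1]$, since it depends on basic but nontrivial facts about injective tensor products; however once this is in hand the forward direction is a clean bookkeeping argument with Proposition \ref{tensor}, and the reverse direction is a short factorization through the ideal. No new estimates beyond those already developed in the paper are needed.
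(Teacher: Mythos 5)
Your proof is correct and follows essentially the same approach as the paper: reduce via Corollary \ref{main corollary} to showing that $[A_0^*B_{Y_0^*}, A_1^*B_{Y_1^*}]$ has summable Szlenk index, then apply Proposition \ref{tensor} and the same bookkeeping with the partitions $(k_i)_{i=1}^n$ to obtain the bound $4R(M_0+M_1)$. The only difference is that you make the factorization $A_0=Q_0\circ(A_0\otimes A_1)\circ J_0$ explicit in the reverse direction, whereas the paper simply cites the ideal property; this is a welcome clarification rather than a departure.
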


\begin{proof}  If $A_0\otimes A_1$ has summable Szlenk index, by the ideal property, $A_0, A_1$ do.

Let $K=[A^*_0B_{Y^*_0}, A^*_1 B_{Y^*_1}]\subset (X_0\hat{\otimes}_\ee X_1)^*$ and note that  $\overline{\text{abs\ co}}^{\text{weak}^*}(K)=(A_0\otimes A_1)^* B_{(Y_0\hat{\otimes}_\ee Y_1)^*}$ by the Hahn-Banach theorem.    By Corollary \ref{main corollary}, it is sufficient to show that $K$ has summable Szlenk index.  Assume $A_0$ has $M_0$-summable Szlenk index and $A_1$ has $M_1$-summable Szlenk index. Let $R=\max\{\|A_0\|, \|A_1\|\}$.  Fix $\ee_1, \ldots, \ee_n>0$ such that $\sum_{i=1}^n \ee_i >4R(M_0+M_1)$.  Then for any $(k_i)_{i=1}^n\in \{0,1\}^n$, $$M_0+M_1< \sum_{i=1}^n k_i \ee_i/4R+\sum_{i=1}^n (1-k_i)\ee_i/4R,$$ so that either $\sum_{i=1}^n k_i \ee_i/4R>M_0$ or $\sum_{i=1}^n (1-k_i)\ee_i/4R>M_1$. In either case, $$[s_{\ee_1/4R}^{k_1}\ldots s_{\ee_n/4R}^{k_n}(A^*_0 B_{Y^*_0}), s_{\ee_1/4R}^{1-k_1}\ldots s_{\ee_n/4R}^{1-k_n}(A^*_1 B_{Y^*_1})]=\varnothing.$$   By Proposition \ref{tensor}, $$s_{\ee_1}\ldots s_{\ee_n}(K)\subset \bigcup_{(k_i)_{i=1}^n\in\{0,1\}^n} [s_{\ee_1/4R}^{k_1}\ldots s_{\ee_n/4R}^{k_n}(A^*_0 B_{Y^*_0}), s_{\ee_1/4R}^{1-k_1}\ldots s_{\ee_n/4R}^{1-k_n}(A^*_1 B_{Y^*_1})]=\varnothing,$$ whence $K$ has $4R(M_0+M_1)$-summable Szlenk index.

\end{proof}

The following answers a question from \cite{DK}. 

\begin{corollary} Let $X_0, X_1$ be non-zero Banach spaces.  Then $X_0\hat{\otimes}_\ee X_1$ is Asymptotic $c_0$  if and only if $X_0, X_1$ are. Equivalently, $X_0\hat{\otimes}_\ee X_1$ has summable Szlenk index if and only if $X_0, X_1$ do.

\end{corollary}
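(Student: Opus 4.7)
The plan is to deduce this corollary immediately from the previous corollary (which handles the operator version) by specializing to the identity operators, and then to use Theorem \ref{main thm} to pass between the summable Szlenk index formulation and the Asymptotic $c_0$ formulation.

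First I would apply the preceding corollary with $A_0 = I_{X_0}$ and $A_1 = I_{X_1}$. Since $X_0$ and $X_1$ are non-zero, both identity operators are non-zero, so the corollary applies. The key observation is that the induced operator $I_{X_0} \otimes I_{X_1}$ on $X_0 \hat{\otimes}_\ee X_1$ is precisely the identity operator of $X_0 \hat{\otimes}_\ee X_1$. Hence $X_0 \hat{\otimes}_\ee X_1$ has summable Szlenk index if and only if both $X_0$ and $X_1$ do, which is the second equivalence in the statement.

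For the Asymptotic $c_0$ equivalence, I would invoke Theorem \ref{main thm}: for an infinite-dimensional Banach space $X$, the identity $I_X$ has summable Szlenk index if and only if $X$ is Asymptotic $c_0$. Provided $X_0$ and $X_1$ are both infinite-dimensional (the relevant setting for the Asymptotic $c_0$ condition), $X_0 \hat{\otimes}_\ee X_1$ is infinite-dimensional as well, and the equivalence of the summable Szlenk index statements transfers immediately to the equivalence of the Asymptotic $c_0$ statements via Theorem \ref{main thm} applied three times.

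Since both of the ingredients are already available, there is essentially no obstacle. The only mild subtlety is bookkeeping around finite-dimensional cases, which are vacuous for the Asymptotic $c_0$ formulation (and trivial for the summable Szlenk formulation, since finite-dimensional spaces have $B_{X^*}$ norm-compact, so $Sz(X,\ee) = 1$ for all $\ee > 0$).
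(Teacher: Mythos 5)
Your proof is correct and takes the approach the paper intends: the paper states this corollary without proof because it is an immediate consequence of the preceding corollary (specialized to $A_0 = I_{X_0}$, $A_1 = I_{X_1}$, with $I_{X_0} \otimes I_{X_1} = I_{X_0 \hat{\otimes}_\ee X_1}$) together with the equivalence $(ii)\Leftrightarrow(iii)$ of Theorem \ref{main thm}. Your remark about the finite-dimensional cases is the right bookkeeping observation, and otherwise nothing is missing.
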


\section{Direct sums}

The first result of this section is an operator version of a result from \cite{DK2}. However, we will use Corollary \ref{main corollary} to give a new proof.

\begin{theorem} Suppose that $\Lambda$ is a non-empty set and for each $\lambda\in \Lambda$, $A_\lambda:X_\lambda\to Y_\lambda$ is an operator.  Assume also that $\sup_{\lambda\in \Lambda}\|A_\lambda\|<\infty$ and let $A:(\oplus_{\lambda\in \Lambda}X_\lambda)_{c_0(\Lambda)}\to (\oplus_{\lambda\in \Lambda} Y_\lambda)_{c_0(\Lambda)}$ be the operator such that $A|_{X_\lambda}=A_\lambda$.

Then $A$ has summable Szlenk index if and only if there exists $M$ such that for each $\lambda\in \Lambda$, $A_\lambda$ has $M$-summable Szlenk index. 
\label{sum}
\end{theorem}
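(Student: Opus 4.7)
The forward direction is immediate from the ideal property. Each $A_\lambda$ factors as $A_\lambda = p_\lambda A i_\lambda$, where $i_\lambda : X_\lambda \to (\oplus_{\mu \in \Lambda} X_\mu)_{c_0(\Lambda)}$ is the canonical inclusion and $p_\lambda : (\oplus_{\mu \in \Lambda} Y_\mu)_{c_0(\Lambda)} \to Y_\lambda$ the canonical projection, each of norm at most $1$. The estimate $\Sigma_n(ABC) \leqslant \|A\|\,\Sigma_n(B)\,\|C\|$ from the proof of Theorem \ref{ideal} gives $\Sigma(A_\lambda) \leqslant \Sigma(A) < \infty$ uniformly in $\lambda$, and Lemma \ref{tonytodd}(ii) then yields that every $A_\lambda$ has $4\Sigma(A)$-summable Szlenk index.

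For the reverse direction, assume each $A_\lambda$ has $M$-summable Szlenk index and set $C = \sup_\lambda \|A_\lambda\|$. The plan is to work with $K_\lambda := A_\lambda^* B_{Y_\lambda^*} \subseteq X_\lambda^*$, regarded as a subset of the $\lambda$-coordinate copy of $X_\lambda^*$ inside the natural isometric identification $X^* = (\oplus_{\mu \in \Lambda} X_\mu^*)_{\ell_1(\Lambda)}$ (where $X$ is the $c_0$-direct sum of the $X_\mu$), and to consider $K := \{0\} \cup \bigcup_\lambda K_\lambda \subseteq X^*$. The goal is to prove (a) $K$ is weak$^*$-compact, (b) $\overline{\text{abs co}}^{\text{weak}^*}(K) = A^* B_{Y^*}$, and (c) $K$ has summable Szlenk index; Corollary \ref{main corollary} then delivers that $A$ has summable Szlenk index.

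Items (a) and (b) are formal. For (a), $K$ is clearly norm-bounded, so it suffices to check weak$^*$-closedness: if a net $(y^*_\alpha) \subseteq K$ converges weak$^*$ to a nonzero $x^*$, then $x^*$ lies in a unique coordinate subspace $X_{\lambda_0}^*$, and choosing $x_0 \in X_{\lambda_0}$ with $x^*(x_0) \neq 0$ forces $y^*_\alpha \in K_{\lambda_0}$ eventually, since $y^*(x_0) = 0$ whenever $y^* \in K_\mu$ with $\mu \neq \lambda_0$; weak$^*$-closedness of $K_{\lambda_0}$ then places $x^*$ in $K$. For (b), $B_{Y^*}$ is the norm-closed absolutely convex hull of $\bigcup_\lambda B_{Y_\lambda^*}$ inside the $\ell_1$-sum, so $A^* B_{Y^*} \subseteq \overline{\text{abs co}}^{\|\cdot\|}(K) \subseteq \overline{\text{abs co}}^{\text{weak}^*}(K)$, and the reverse inclusion follows from weak$^*$-compactness of $A^* B_{Y^*}$.

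The substantive step is the coordinate separation lemma
\[ s_\ee(K) \subseteq \{0\} \cup \bigcup_\lambda s_\ee(K_\lambda) \qquad (\ee > 0), \]
proven by the same coordinate trick applied to the weak$^*$-approximating pairs from $K$ that witness diameter $> \ee$ at a nonzero point of $s_\ee(K)$. Iterating gives $s_{\ee_1}\ldots s_{\ee_n}(K) \subseteq \{0\} \cup \bigcup_\lambda s_{\ee_1}\ldots s_{\ee_n}(K_\lambda)$, and whenever $\sum_{i=1}^n \ee_i > M$ the right-hand side collapses to $\{0\}$ by hypothesis on the $A_\lambda$. To upgrade this containment to emptiness, assume $\sum \ee_i > M + 2C$: either some $\ee_j > 2C \geqslant \text{diam}(K)$, so $s_{\ee_j}$ vanishes at its stage; or every $\ee_j \leqslant 2C$, whence $\sum_{i \geqslant 2} \ee_i > M$ and so $s_{\ee_2}\ldots s_{\ee_n}(K) \subseteq \{0\}$, which the outer $s_{\ee_1}$ then kills. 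Hence $K$ has $(M + 2C)$-summable Szlenk index, completing the reverse direction. The one real obstacle is the coordinate separation lemma; everything else is an assembly of the earlier tools.
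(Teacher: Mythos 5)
Your proof is correct and follows essentially the same approach as the paper's: take $K = \bigcup_\lambda A_\lambda^* B_{Y_\lambda^*}$, show the coordinate separation $s_{\ee_1}\ldots s_{\ee_n}(K) \subseteq \{0\} \cup \bigcup_\lambda s_{\ee_1}\ldots s_{\ee_n}(A_\lambda^* B_{Y_\lambda^*})$, deduce that $K$ has $(M+2\sup_\lambda\|A_\lambda\|)$-summable Szlenk index, and invoke Corollary \ref{main corollary}. You simply make explicit several details (weak$^*$-compactness of $K$, the identification $\overline{\text{abs co}}^{\text{weak}^*}(K)=A^*B_{Y^*}$, and the reduction to $\ee_j \leqslant 2C$) that the paper leaves to the reader.
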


\begin{proof}  It is clear that if $A$ has $M$-summable Szlenk index, $A_\lambda$ has $M$-summable Szlenk index for each $\lambda\in \Lambda$, which gives one direction.

Now suppose there exists $M$ such that $A_\lambda$ has $M$-summable Szlenk index for each $\lambda\in \Lambda$.  Let $K= \bigcup_{\lambda\in \Lambda} A^*_\lambda B_{Y^*_\lambda}$.  It is clear that for any $n\in \nn$ and $\ee_1, \ldots, \ee_n$, $$s_{\ee_1}\ldots s_{\ee_n}(K) \subset \{0\} \cup \bigcup_{\lambda\in \Lambda} s_{\ee_1}\ldots s_{\ee_n}(A^*_\lambda B_{X_\lambda^*}).$$  From this it follows that with $M'=M+ 2 \sup_{\lambda\in \Lambda} \|A_\lambda\|$, $K$ has $M'$-summable Szlenk index.  Indeed, suppose $\ee_1, \ldots, \ee_n>0$ are such that $\sum_{i=1}^n \ee_i>M'$.   Note that  $\ee_i \leqslant 2\sup_{\lambda\in \Lambda} \|A_\lambda\|$ for each $1\leqslant i\leqslant n$.  This means that $\sum_{i=2}^n \ee_i >M$, whence $$s_{\ee_1}\ldots s_{\ee_n}(K) \subset s_{\ee_1}\Bigl( \{0\}\cup\bigcup_{\lambda\in \Lambda} s_{\ee_2}\ldots s_{\ee_n}(A^*_\lambda B_{Y^*_\lambda})\Bigr) = s_{\ee_1}(\{0\})=\varnothing.$$

\end{proof}

We next turn to a facet of this problem which is of interest for operators, but not for spaces.  Above we considered $c_0$ direct sums, while below we wish to consider $\ell_p$ direct sums, $1\leqslant p\leqslant \infty$. However, if $(X_\lambda)_{\lambda\in \Lambda}$ is a collection of non-zero Banach spaces, $(\oplus_{\lambda\in \Lambda} X_\lambda)_{\ell_p(\Lambda)}$  contains a copy of $\ell_p$ and therefore cannot have summable Szlenk index  except in the case that $\Lambda$ is finite. Our final goal is to elucidate the situation for operators.

\begin{proposition} Fix $1\leqslant p\leqslant \infty$.   For any operators $A_i:X_i\to Y_i$, $1\leqslant i\leqslant k$ and $n\in \nn$, $$\Sigma_n\Bigl(A:(\oplus_{i=1}^k X_i)_{\ell_p^k}\to (\oplus_{i=1}^k Y_i)_{\ell_p^k}\Bigr) \leqslant \|(\Sigma_n(A_i))_{i=1}^k\|_{\ell_p^k}$$  and $$\Sigma\Bigl(A:(\oplus_{i=1}^k X_i)_{\ell_p^k}\to (\oplus_{i=1}^k Y_i)_{\ell_p^k}\Bigr) = \|(\Sigma(A_i))_{i=1}^k\|_{\ell_p^k}.$$

\label{finite}
\end{proposition}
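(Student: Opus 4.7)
The plan is to prove the finite-$n$ upper bound via a pruning argument and the matching lower bound for $\Sigma$ by an explicit staggered construction that interleaves near-optimal weakly null collections for each $A_i$ in disjoint blocks of the tree.

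For the upper bound, take a weakly null $(x_t)_{t\in D^{\leqslant n}}\subset B_{(\oplus_i X_i)_{\ell_p^k}}$ and decompose $x_t=(x_t^{(1)},\ldots,x_t^{(k)})$; since the $\ell_p^k$ norm dominates each coordinate and the coordinate projections are weakly continuous, each $(x_t^{(i)})_t\subset B_{X_i}$ is weakly null. Apply Proposition \ref{tree} to the map $F\colon D^n\to \prod_{i=1}^k[0,n\|A_i\|]$ defined by $F(t)_i=\|A_i\sum_{j=1}^n x_{t|_j}^{(i)}\|$: for any $\delta>0$ this produces a pruning $\phi$ and a point $\varpi$ with $|F(\phi(t))_i-\varpi_i|\leqslant \delta$ for every $t\in D^n$ and every $i$. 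The pruning preserves weak nullity of each coordinate collection, so the definition of $\Sigma_n(A_i)$ forces $\varpi_i\leqslant \Sigma_n(A_i)+\delta$. Monotonicity of the $\ell_p^k$ norm on the positive orthant then yields
$$\Big\|A\sum_{j=1}^n x_{\phi(t)|_j}\Big\|_{\ell_p^k}=\|F(\phi(t))\|_{\ell_p^k}\leqslant \|(\Sigma_n(A_i)+2\delta)_{i=1}^k\|_{\ell_p^k}$$
for every $t\in D^n$. Letting $\delta\to 0$ gives $\Sigma_n(A)\leqslant \|(\Sigma_n(A_i))_{i=1}^k\|_{\ell_p^k}$, and taking $\sup_n$ (using $\Sigma_n(A_i)\nearrow \Sigma(A_i)$) yields $\Sigma(A)\leqslant \|(\Sigma(A_i))_{i=1}^k\|_{\ell_p^k}$.

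For the matching lower bound on $\Sigma(A)$, fix $u_i<\Sigma(A_i)$, pick $m_i$ with $\Sigma_{m_i}(A_i)>u_i$, and by Remark \ref{remrem} choose weakly null $(x_t^{(i)})_{t\in D_i^{\leqslant m_i}}\subset B_{X_i}$ with $\|A_i\sum_{j=1}^{m_i} x_{t|_j}^{(i)}\|>u_i$ for every $t\in D_i^{m_i}$, where each $D_i$ is a weak neighborhood basis at $0$ in $X_i$. Work in the common directed set $D=D_1\times\cdots\times D_k$ with the product order, set $M_i=m_1+\cdots+m_i$, $N=M_k$, and construct a staggered collection $(z_t)_{t\in D^{\leqslant N}}\subset B_{(\oplus_i X_i)_{\ell_p^k}}$ by: for $t=(w_1,\ldots,w_l)\in D^l$ with $M_{i-1}<l\leqslant M_i$, let the $i$-th coordinate of $z_t$ equal $x^{(i)}_{(w^{(i)}_{M_{i-1}+1},\ldots,w^{(i)}_l)}$ and all other coordinates equal zero. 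Each $z_t$ has a single nonzero coordinate of norm at most $1$, and weak nullity of $(z_t)$ at each level reduces to weak nullity of $(x^{(i)})$ along the cofinal coordinate projection $D\to D_i$. For every $t\in D^N$ the $i$-th coordinate of $\sum_{j=1}^N z_{t|_j}$ is $\sum_{l=1}^{m_i} x^{(i)}_{s^{(i)}|_l}$ with $s^{(i)}=(w^{(i)}_{M_{i-1}+1},\ldots,w^{(i)}_{M_i})\in D_i^{m_i}$, so $\|A\sum_j z_{t|_j}\|_{\ell_p^k}\geqslant \|(u_i)_{i=1}^k\|_{\ell_p^k}$; hence $\Sigma_N(A)\geqslant \|(u_i)\|_{\ell_p^k}$, and letting $u_i\to \Sigma(A_i)$ gives the matching lower bound.

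The main obstacle is engineering the staggered construction so that it is simultaneously weakly null in the direct sum and yields near-optimal sum norms on every coordinate at each $t\in D^N$. The resolution is to assign each coordinate its own disjoint block of tree levels $(M_{i-1},M_i]$ and to activate only a single coordinate of $z_t$ per level: this makes the coordinates of $\sum_{j=1}^N z_{t|_j}$ decouple cleanly, and the product structure on $D$ guarantees that each level's net is weakly null in the direct sum via the cofinal projection $D\to D_i$.
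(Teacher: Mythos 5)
Your proof is correct and follows essentially the same strategy as the paper's: coordinatewise pruning via Proposition \ref{tree} for the upper bound, and a staggered construction interleaving near-optimal weakly null collections in disjoint blocks of levels for the lower bound. The only cosmetic differences are that you apply Proposition \ref{tree} once to a vector-valued map (the paper applies it $k$ times, once per coordinate), you allow the block sizes $m_i$ to vary (the paper takes $n=\max_i n_i$ throughout), and you index by the product directed set $D_1\times\cdots\times D_k$ rather than a single weak neighborhood basis at $0$ in the direct sum; none of these changes the substance of the argument.
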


\begin{proof} In the proof, we identify $X_i$ and $Y_i$ with subspaces of $X=(\oplus_{i=1}^k X_i)_{\ell_p^k}$ and $Y=(\oplus_{i=1}^k Y_i)_{\ell_p^k}$, respectively. Let $A:X\to Y$ denote the operator with $A|_{X_j}=A_j$.  Let $P_j:X\to X$ denote the projection from $X$ onto $X_j$. Then  $\Sigma_n(A_i)=\Sigma_n(AP_i)$.

Fix $n\in \nn$ and a weakly null collection $(x_t)_{t\in D^{\leqslant n}}\subset B_X$.    For each $i\in I$, fix $a_i>\Sigma_n(A_i)$.  By applying Proposition \ref{tree} to $(P_jx_t)_{t\in D^{\leqslant n}}$ for each $j=1, \ldots, k$ and relabeling, we may assume $$\|AP_j\sum_{m=1}^n x_{t|_m} \|\leqslant a_j,$$  and $$\|A\sum_{m=1}^n x_{t|_m}\|=\|(\|AP_j\sum_{m=1}^n x_{t|_m}\|)_{j=1}^k\|_{\ell_p^k}\leqslant \|(a_i)_{i=1}^k\|_{\ell_p^k}.$$  Since $a_i>\Sigma_n(A_i)$ was arbitrary, we conclude $$\Sigma_n(A) \leqslant \|(\Sigma_n(A_i))_{i=1}^k\|_{\ell_p^k}.$$

Now for each $1\leqslant i\leqslant k$, fix $0<b_i<\Sigma(A_i)$ if $\Sigma(A_i)>0$ and otherwise let $b_i=0$. If $b_i>0$, fix  $n_i\in \nn$ such that $b_i<\Sigma_{n_i}(K_i)$, and otherwise let $n_i=1$.  Let $n=\max_{1\leqslant i\leqslant k}n_i$. Let $D$ be a weak neighborhood basis at $0$ in $X$.  By Remark \ref{remrem}, we may fix for each $1\leqslant j\leqslant k$ some weakly null collection $(x^j_t)_{t\in D^{\leqslant n}}\subset B_{X_j}\subset B_X$ such that $$\inf_{t\in D^n}\|A\sum_{m=1}^n x^j_{t|_m}) \geqslant b_j.$$   Now fix $t\in D^{\leqslant kn}$ and assume $|t|=(j-1)n+r$, $j,r\in  \nn$, $0\leqslant r<n$. We may write $t=s\smallfrown s'$, where $|s|=(j-1)n$ and $|s'|=r$, and let $x_t=x^j_{s'}$. Then $(x_t)_{t\in D^{\leqslant kn}}\subset B_X$ is weakly null and $$\inf_{t\in D^{kn}} \|A\sum_{m=1}^{kn} x_{t|_m}\| \geqslant \|(b_i)_{i=1}^k\|_{\ell_p^k}.$$   This shows that $\Sigma(A)\geqslant \|(\Sigma(A_i))_{i=1}^k\|_{\ell_p^k}$. The reverse inequality follows from the previous paragraph.

\end{proof}

\begin{corollary} Fix $1\leqslant p\leqslant \infty$.  Assume that $\Lambda$ is a non-empty set and for each $\lambda\in \Lambda$, $A_\lambda:X_\lambda\to Y_\lambda$ is an operator.  Assume also that $\sup_{\lambda\in \Lambda} \|A_\lambda\|<\infty$ and let $A:(\oplus_{\lambda\in \Lambda} X_\lambda)_{\ell_p(\Lambda)} \to (\oplus_{\lambda\in \Lambda}Y_\lambda)_{\ell_p(\Lambda)}$ be the operator such that $A|_{X_\lambda}=A_\lambda$.  Then $A$ has summable Szlenk index if and only if $(\|A_\lambda\|)_{\lambda\in \Lambda}\in c_0(\Lambda)$ and $(\Sigma(A_\lambda))_{\lambda\in \Lambda}\in \ell_p(\Lambda)$.   Moreover, in this case, $$\Sigma(A)=\|(\Sigma(A_\lambda))_{\lambda\in \Lambda}\|_{\ell_p(\Lambda)}.$$

\end{corollary}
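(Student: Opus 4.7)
My plan is to prove both directions together, extracting the identity $\Sigma(A) = \|(\Sigma(A_\lambda))_{\lambda \in \Lambda}\|_{\ell_p(\Lambda)}$ along the way. The lower bound $\Sigma(A) \geqslant \|(\Sigma(A_\lambda))\|_{\ell_p(\Lambda)}$ will be formal: for any finite $F \subseteq \Lambda$, the finite $\ell_p^F$-sum operator $(\oplus_{\lambda \in F} X_\lambda)_{\ell_p^F} \to (\oplus_{\lambda \in F} Y_\lambda)_{\ell_p^F}$ factors through $A$ via the canonical isometric inclusion and coordinate projection, so the ideal inequality of Theorem \ref{ideal} combined with Proposition \ref{finite} gives $\|(\Sigma(A_\lambda))_{\lambda \in F}\|_{\ell_p^F} \leqslant \Sigma(A)$; passing to the supremum over finite $F$ proves the bound, and in particular forces $(\Sigma(A_\lambda)) \in \ell_p(\Lambda)$ whenever $\Sigma(A) < \infty$.

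For the necessity of $(\|A_\lambda\|) \in c_0(\Lambda)$, I will argue by contradiction: if not, fix $c > 0$ and countably many $(\lambda_n)$ with $\|A_{\lambda_n}\| \geqslant c$, and pick $x_n \in B_{X_{\lambda_n}}$ with $\|A_{\lambda_n} x_n\| \geqslant c/2$. Because the $x_n$ (respectively the $A_{\lambda_n} x_n$) have pairwise disjoint support in the $\ell_p$-sum, the map $J:\ell_p(\mathbb{N}) \to X$, $Je_n = x_n$, is an isometric embedding and $B := AJ:\ell_p \to Y$ satisfies $\|B(a)\|_Y \geqslant (c/2)\|a\|_{\ell_p}$, so $B$ is an isomorphic embedding with $\|B^{-1}|_{\mathrm{im}(B)}\| \leqslant 2/c$. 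Writing $I_{\ell_p} = (B^{-1}|_{\mathrm{im}(B)}) \circ B$ and applying Theorem \ref{ideal} produces $\Sigma(I_{\ell_p}) \leqslant (2/c)\Sigma(A) < \infty$, the desired contradiction: for $1 \leqslant p < \infty$, $\ell_p$ is not Asymptotic $c_0$ so Theorem \ref{main thm} forces $\Sigma(I_{\ell_p}) = \infty$; for $p = \infty$, $\ell_\infty$ contains an isomorphic copy of $\ell_2$, and one more application of the same ideal argument reduces the conclusion to the impossible $\Sigma(I_{\ell_2}) < \infty$.

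For the sufficiency and the matching upper bound I will show $\Sigma_n(A) \leqslant \|(\Sigma(A_\lambda))\|_{\ell_p(\Lambda)}$ for each $n$. Fix $n$, $\eta > 0$, and a weakly null $(x_t)_{t \in D^{\leqslant n}} \subseteq B_X$. Using $(\|A_\lambda\|) \in c_0(\Lambda)$, set $F := \{\lambda : \|A_\lambda\| \geqslant \eta/(2n)\}$ (finite) and pick $\delta > 0$ with $2\delta|F|^{1/p} < \eta/2$. For each $\lambda \in F$, the projection $(P_\lambda x_t)$ is weakly null in $B_{X_\lambda}$, and applying Proposition \ref{tree} once per $\lambda \in F$ produces, by composition, a single pruning $\phi$ with $\|A_\lambda \sum_i P_\lambda x_{\phi(t)|_i}\| \leqslant \Sigma_n(A_\lambda) + 2\delta$ for every $\lambda \in F$ and $t \in D^n$. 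Decomposing $A = AP_F + AP_{F^c}$ with $P_F, P_{F^c}$ the coordinate projections, the triangle inequality combined with Minkowski bounds $\|AP_F \sum_i x_{\phi(t)|_i}\|_Y \leqslant \|(\Sigma(A_\lambda))\|_{\ell_p(\Lambda)} + 2\delta|F|^{1/p}$, while the crude tail estimate $\|AP_{F^c}\sum_i x_{\phi(t)|_i}\|_Y \leqslant n \sup_{\lambda \in F^c}\|A_\lambda\| < \eta/2$ handles the unpruned coordinates; adding yields $\inf_{t \in D^n} \|A\sum_i x_{t|_i}\|_Y < \|(\Sigma(A_\lambda))\|_{\ell_p(\Lambda)} + \eta$, whence $\Sigma_n(A) \leqslant \|(\Sigma(A_\lambda))\|_{\ell_p(\Lambda)}$. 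The hard part is precisely this tail estimate: since we cannot prune simultaneously for the infinitely many $\lambda \in F^c$, the naive bound $n\sup_{F^c}\|A_\lambda\|$ would scale with $n$; the resolution is that $\Sigma_n(A)$ is handled one $n$ at a time, so for each fixed $n$ the finite set $F$ can be enlarged until $\sup_{F^c}\|A_\lambda\| < \eta/(2n)$, making the tail negligible while preserving an $n$-uniform upper bound on $\Sigma(A)$.
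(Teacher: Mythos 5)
Your proof is correct and takes essentially the same route as the paper's: the lower bound and the necessity of the $c_0$ condition come from Proposition \ref{finite} and the ideal inequality together with the observation that $A$ preserves a copy of $\ell_p$, and the upper bound comes from finite truncations plus a tail estimate, which the paper compresses as ``arguing as in the proof of Theorem \ref{ideal}.'' One small technical point worth making explicit in the necessity step: $B^{-1}|_{\mathrm{im}(B)}$ is defined only on the closed subspace $\mathrm{im}(B)\subset Y$, so to invoke the ideal inequality one should either note that $\Sigma$ is unchanged when the codomain of $B$ is restricted to its closed range (since $\Sigma_n$ depends only on the weak$^*$-compact set $B^*B_{Y^*}\subset \ell_p^*$, which is unaffected by such a restriction by Hahn--Banach), or else argue directly from the tree definition of $\Sigma_n$ that $\Sigma_n(I_{\ell_p})\leqslant (2/c)\Sigma_n(A)$.
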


\begin{proof} Throughout the proof, for a finite subset $\Upsilon$ of $\Lambda$, let $P_\Upsilon A$ denote the map given by $P_\Upsilon A|_{X_\lambda}=A_\lambda$ if $\lambda\in \Upsilon$ and $P_\Upsilon A|_{X_\lambda}=0$ if $\lambda\in \Lambda\setminus \Upsilon$.

If $(\|A_\lambda\|)_{\lambda\in \Lambda}\in \ell_\infty(\Lambda)\setminus c_0(\Lambda)$, then $A$ preserves an isomoprhic copy of $\ell_p$ and cannot have summable Szlenk index. By Proposition \ref{finite},  \begin{align*} \Sigma(A) & \geqslant \sup \{\Sigma(P_\Upsilon A): \Upsilon\subset \Lambda\text{\ finite}\} = \sup\{\|(\Sigma(A_\lambda))_{\lambda\in \Upsilon}\|_{\ell_p(\Upsilon)}: \Upsilon \subset \Lambda\text{\ finite}\} \\ & = \|(\Sigma(A_\lambda))_{\lambda\in \Lambda}\|_{\ell_p(\Lambda)}.\end{align*} Therefore if $A$ has summable Szlenk index, $(\|A_\lambda\|)_{\lambda\in \Lambda}\in c_0(\Lambda)$ and $\|(\Sigma(A_\lambda))_{\lambda\in \Lambda}\|_{\ell_p(\Lambda)}\leqslant \Sigma(A)<\infty$.

Now if $(\|A_\lambda\|)_{\lambda\in \Lambda}\in c_0(\Lambda)$ and $\|(\Sigma(A_\lambda))_{\lambda\in \Lambda}\|_{\ell_p(\Lambda)}<\infty$, $A\in \overline{\{P_\Upsilon A:\Upsilon\subset \Lambda\text{\ finite}\}}$.  Arguing as in the proof of Theorem \ref{ideal}, $$\Sigma(A)\leqslant \sup \{\Sigma(P_\Upsilon A): \Upsilon\subset \Lambda\text{\ finite}\}= \|(\Sigma(A_\lambda))_{\lambda\in \Lambda}\|_{\ell_p(\Lambda)}<\infty.$$

\end{proof}

\end{document}